\newcommand{\N}{\mathbb{N}}
\newcommand{\R}{\mathbb{R}}
\newcommand{\C}{\mathbb{C}}
\newcommand{\kommentar}[1]{}
\def\Sc{\mathop{\rm Sc}\nolimits}
\newtheorem{theorem}{Theorem}
\newtheorem{definition}{Definition}
\newtheorem{lemma}{Lemma}
\newtheorem{remark}{Remark}
\newenvironment{beweis}{\begin{proof}[Proof]}{\end{proof}}
\title{The Szeg\"o metric associated to Hardy spaces of Clifford algebra valued functions and some geometric properties}
\author{Dennis Grob
\thanks{Lehrstuhl A f\"ur Mathematik, Rheinisch-Westf\"alische Technische Hochschule Aachen,
D-52056 Aachen, Germany. E-mail: {\tt
dennis.grob@mathA.rwth-aachen.de}} \and Rolf
S\"oren~Krau{\ss}har
\thanks{Fachbereich Mathematik, Technische Universit\"at Darmstadt,
 Schlo{\ss}gartenstr. 7, 64289 Darmstadt, Germany. E-mail: {\tt krausshar@mathematik.tu-darmstadt.de} } }
\begin{document}

\maketitle

\begin{abstract}
In analogy to complex function theory we introduce a Szeg\"o
metric in the context of hypercomplex function theory dealing with
functions that take values in a Clifford algebra. In particular,
we are dealing with Clifford algebra valued functions that are
annihilated by the Euclidean Dirac operator in $\mathbb{R}^{m+1}$.
These are often called monogenic functions. As a consequence of
the isometry between two Hardy spaces of monogenic functions on
domains that are related to each other by a conformal map, the
generalized Szeg\"o metric turns out to have a pseudo-invariance
under M\"obius transformations. This property is crucially applied
to show that the curvature of this metric is always negative on
bounded domains. Furthermore, it allows us to establish that this
metric is complete on bounded domains.
\end{abstract}

\section{Introduction and basic notions}

The theory of functions with values in a Clifford algebra that are annihilated by the Dirac operator represents one possibility to generalize classical complex function theory to higher dimensions and can be regarded as a counterpart to function theory of several complex variables. In the classical case dealing with functions in one complex variable and in the context of several complex variables theory, the study of the properties of the Bergman metric and of the related Szeg\"o metric has been a subject of interest over the past decades. These metrics turned out to be a useful tool in differential geometry, too. More concretely, using special properties of the Szeg\"o metric it was possible to establish a close connection between the completeness of the Bergman metric and the smoothness of the boundary. See for instance works of Diederich, Forn\ae{}ss, Herbort, Kobayashi, Krantz, Pflug, and others ( \cite{Died70,Koba59,Koba62,JPZ00,BP98,Herb99,DFH84,Kran01}) in the context of function theory of several complex variables. It is of great interest to investigate on which domains the Szeg\"o metric is complete. Recent years publications like \cite{Herb99,JPZ00,KK03} show that there is still both interest and need for research in the field of several complex variables, also in the study basic properties of these metrics (\cite{ForLee2009}). Publications for example by Chen \cite{Chen04,Chen03,Chen99}, Nikolov \cite{Niko03} and Herbort \cite{Herb99,Herb04} examined the Bergman completeness on different kinds of manifolds and domains with varying smoothness of the boundary, while Blumberg, Nikolov, Pflug and others studied estimates and behaviour of both the Bergman kernel and the Bergman metric on different types of domains, see for example \cite{Blum05,NP03,NP03_2}.
\par\medskip\par

The strong results in the context of one and several complex variables theory provide a motivation to investigate whether one can establish similar results in the context of Clifford algebra valued functions. In order to start we  first need to briefly recall some basic terms and notions.

\par\medskip\par

Let $\{e_1,e_2,\ldots,e_m\}$ be the standard basis of the
Euclidean vector space ${\mathbb{R}}^m$. Further, let ${\mathcal{C}l}_{0m}$
be the associated real Clifford algebra  in which
$$e_i e_j + e_j e_i = - 2 \delta_{ij} e_0,\quad i,j=1,\cdots,m,$$
holds. Here $\delta_{ij}$ is Kronecker symbol. Each
element $a \in {\mathcal{C}l}_{0m}$ can be represented in the form $ a =
\sum_A a_A e_A$ with $a_A \in {\mathbb{R}}$, $A \subseteq
\{1,\cdots,m\}$, $e_A = e_{l_1} e_{l_2} \cdots e_{l_r}$, where $1
\le l_1 < \cdots < l_r \le m,\;\; e_{\emptyset} = e_0 = 1$. The
scalar part of $a$, denoted by $\Sc(a)$, is defined as the $a_0$
term. The Clifford conjugate of $a$ is defined by $\overline{a} =
\sum_A a_A \overline{e}_A$, where $\overline{e}_A =
\overline{e}_{l_r} \overline{e}_{l_{r-1}} \cdots
\overline{e}_{l_1}$ and $\overline{e}_j = - e_j$ for
$j=1,\cdots,m,\; \overline{e}_0 = e_0 = 1$.
For the space of paravectors ${\R} \oplus {\R}^n \subset {\mathcal{C}l}_{0m}$, which consists of elements of the form $z = z_0 + z_1 e_1 + z_2 e_2 + \cdots +
z_m e_m$, we also write $\R^{m+1}$ for simplicity. For such an element $z$, the term $\sum_{i=1}^m z_i e_i$ is called the vector part of $z$ and is denoted by $\overrightarrow{z}$ or $Vec(z)$. The standard scalar product
between two Clifford numbers $a,b \in {\mathcal{C}l}_{0m}$ is defined by
$\langle a,b \rangle := \Sc(a\overline{b})$. This induces a pseudo
norm on the Clifford algebra, viz $\|a\| = ( \sum\limits_A
|a_A|^2)^{1/2}$.

\par\medskip\par
In the Euclidean flat space $\mathbb{R}^{m+1}$, the associated Dirac operator has the simple form
\begin{equation}
\label{cr} D_z = \frac{\partial }{\partial z_0} +
\sum\limits_{j=1}^m \frac{\partial }{\partial z_j} e_j \quad \quad
z_j \in {\mathbb{R}} \;\;j=0,\ldots,m.
\end{equation}
In this particular context it is often called the generalized Cauchy-Riemann operator.
\par\medskip\par
Next let $G \subseteq {\mathbb{R}}^{m+1}$ be an open set. A real
differentiable function $f: G \rightarrow \mathcal{C}l_{0m}$  that
satisfies inside of $G$ the system $D_z f = 0$ (or $ f D_z
=0$) is called left (right) monogenic with respect
to the paravector variable $z$, respectively. In the
two-dimensional case (i.e. $m=1$) this differential operator coincides with the
classical complex Cauchy-Riemann operator. In this sense, the set
of monogenic functions can also be regarded as a higher dimensional
generalization of the set of complex-analytic functions. For
details, see for example \cite{DSS} and elsewhere. The first order
operator $D_z$ factorizes the Euclidean Laplacian, viz
$D_z\overline D_z=\Delta_z$.

\par\medskip\par

An important example of a separable Hilbert space of Clifford
valued monogenic functions that satisfies in particular the
Bergman condition $\|f(z)\|\leq C(z)\|f\|_{L^2}$ is the closure of
the set of functions that are monogenic inside a domain of
${\mathbb{R}}^{m+1}$ and square integrable on the boundary. This
space is called the Hardy space of monogenic functions. Each Hardy
space of monogenic Clifford algebra valued functions is a Banach
space which is endowed with the Clifford-valued inner product $
(f,g) = \int_{\partial G} \overline{f(z)} g(z) dS$, where $f$ and
$g$ denote
elements of the Hardy space. Basic contributions to the study  of these function spaces have already been conducted in the 1970s, see for example \cite{BD78}.\\
The reproducing kernel $K_G$ of the Hardy space of monogenic functions, called the Szeg\"o
kernel, is uniquely defined and satisfies $f(z) = \int_{\partial G}
{K_{G}(z,w)} f(w) dV $ for any function $f$ that belongs to that Hardy space.
The function $K_G(z,z)$ represents a non-negative real valued function (see, for example, \cite{BDS,GHS} or below). This in turn can be used to define a Hermitian metric on the domain $G$. This metric then is called the Szeg\"o metric.
\par\medskip\par

A reason why we focus on this paper on the study of the Szeg\"o metric and not on a generalization of the Bergman metric is that we are interested in metrics which are invariant under conformal transformations. Although we do not get a genuine total invariance, the Szeg\"o metric turns out to exhibit an invariance up to a simple scaling factor. This is a consequence of the well-known transformation formula between two Hardy spaces of monogenic functions, see for instance \cite{Cno02,Cno94}. This transformation formula expresses the isometry of two Hardy spaces of monogenic functions in the case that both domains can be transformed into each other by a conformal map. As a consequence of Liouville's theorem, this is exactly the case when both domains can be transformed into each other by a M\"obius transformation. A M\"obius transformation is generated by  reflections at spheres and hyperplanes. Following classical works like for instance,\cite{Ahlf86}, we recall that a M\"obius transformation in $\mathbb{R}^{m+1}$ can be expressed in the simple form $T(z) = (az+b)(cz+d)^{-1}$ when $a,b,c,d$ are Clifford numbers that satisfy the constraints
\begin{enumerate}
\item[(i)] $a,b,c,d$ are products of paravectors from
$\mathbb{R}^{m+1}$
\item[(ii)] \ $\tilde{H} := a\tilde{d} - b\tilde{c} = 1 $
\item[(iii)] \ $ac^{-1}, \, c^{-1}d \in \R^{m+1} $ \, for \, $ c \neq 0 \, $ and  \, $ bd^{-1} \in \R^{m+1} $ \, for \,$ c=0 $ .
\end{enumerate}
\par\medskip\par
Recently some attempts have been made to introduce a Bergman
metric in the context of hypercomplex function theory, see e.g.
\cite{Cerv09}. In the very interesting work \cite{Cerv09} the
author studied generalizations of the famous Bergman
transformation formula. However, the transformation formula
presented in \cite{Cerv09} does not represent an isometry between
the two Bergman spaces. We do not have an isometry as a
consequence of the appearance of the extra factor
$\frac{1}{|z|^2}$. That factor is only equal to $1$ on the
boundary of the unit ball and not in the interior of it. Due to
the non-existence of an isometry of two Bergman spaces of two
conformally equivalent domains it seems very unlikely that one can
establish a total analogy of the classical transformation formula
for the Bergman kernel when dealing with Clifford algebras that
are different from $\C$. This is one motivation why we look in
this paper at the hypercomplex Szeg\"o kernel instead.
\par\medskip\par
The existence of an isometric transformation formula for the hypercomplex Szeg\"o kernel is a fundamental advantage.
In Section~2 of this paper we depart from that transformation formula in order to establish the invariance of the hypercomplex Szeg\"o metric (up to a scaling factor) under conformal transformations. This invariance in turn is crucially applied in Section~3 to show that the curvature of this metric is negative on bounded domains. Finally, in Section~4, this property again is used to establish the completeness of this metric on bounded domains. These results can be regarded as a close analogy to the results from one and several complex variables theory.

\section{Simple consequence of the transformation formula}

The starting point of our consideration is the following transformation formula of the Szeg\"o kernel, cf. for example \cite{Cno94,Krau98,Cno02}:

\begin{theorem}
Let $G, G^* \, \subseteq \R^{m+1}$ be two bounded domains with smooth boundaries. Let $T: G \longrightarrow G^*$ be a M\"obius transformation with coefficients $a,b,c,d$, which maps $G$ conformally onto $G^*$. By $K_{G}$ and $K_{G^*}$ we denote the Szeg\"o kernels belonging to $H^2(\partial G,\mathcal{C}l_{0m})$ resp. $H^2(\partial G^*,\mathcal{C}l_{0m})$.
We further write $z^*:=T(z)$ for $z \in G$. Then we have the following transformation formula:
\[
   K_{G}(z,\zeta)= \frac{\overline{cz+d}}{\vert cz+d\vert^{m+1}} K_{G^*}(z^*,\zeta^*) \frac{c\zeta + d}{\vert c\zeta+d \vert^{m+1}}.
\]
\end{theorem}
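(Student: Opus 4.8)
The plan is to exhibit the right-hand side as a reproducing kernel for $H^2(\partial G,\mathcal{C}l_{0m})$ and then invoke the uniqueness of the Szeg\"o kernel. Abbreviate the conformal weight by $q(z):=\frac{\overline{cz+d}}{|cz+d|^{m+1}}$, so that the asserted identity reads $K_{G}(z,\zeta)=q(z)\,K_{G^*}(z^*,\zeta^*)\,\overline{q(\zeta)}$, noting that $\overline{q(\zeta)}=\frac{c\zeta+d}{|c\zeta+d|^{m+1}}$. The first ingredient is the transformation behaviour of monogenic functions under $T$: if $F$ is left monogenic on $G^*$, then $z\mapsto q(z)\,F(T(z))$ is left monogenic on $G$, and the induced map $\Phi F:=q\cdot(F\circ T)$ is a bijective isometry from $H^2(\partial G^*,\mathcal{C}l_{0m})$ onto $H^2(\partial G,\mathcal{C}l_{0m})$. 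This is precisely the content of the cited transformation formula, which I take as given.

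Define the candidate $\widetilde K(z,\zeta):=q(z)\,K_{G^*}(z^*,\zeta^*)\,\overline{q(\zeta)}$. It suffices to verify the reproducing identity
\[
   f(z)=\int_{\partial G}\widetilde K(z,w)\,f(w)\,dS(w)
\]
for every $f\in H^2(\partial G,\mathcal{C}l_{0m})$. Since $\Phi$ is onto, write $f=\Phi F$, i.e.\ $f(w)=q(w)\,F(w^*)$ with $w^*=T(w)$. Substituting the definition of $\widetilde K$ and pulling the factor $q(z)$ out to the left (which is legitimate because it does not depend on the integration variable), the task reduces to showing $\int_{\partial G}K_{G^*}(z^*,w^*)\,\overline{q(w)}\,f(w)\,dS(w)=F(z^*)$.

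The heart of the argument is the change of variables $w\mapsto w^*=T(w)$ on the boundary. Two classical facts drive it: first, the Vahlen-type norm identity $(cw+d)\overline{(cw+d)}=|cw+d|^2$, a nonnegative scalar, which yields $\overline{q(w)}\,q(w)=|cw+d|^{-2m}$; second, the conformal distortion factor of $T$ equals $|cw+d|^{-2}$, so the $m$-dimensional surface measure of the boundary transforms as $dS(w^*)=|cw+d|^{-2m}\,dS(w)$. Combining these with $f(w)=q(w)F(w^*)$, the scalar factor $\overline{q(w)}\,q(w)$ cancels the Jacobian exactly, so that $\overline{q(w)}\,f(w)\,dS(w)=|cw+d|^{-2m}F(w^*)\,dS(w)=F(w^*)\,dS(w^*)$. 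Hence the integral over $\partial G$ becomes $\int_{\partial G^*}K_{G^*}(z^*,w^*)\,F(w^*)\,dS(w^*)$, which equals $F(z^*)$ by the reproducing property of $K_{G^*}$. Consequently $\int_{\partial G}\widetilde K(z,w)\,f(w)\,dS(w)=q(z)F(z^*)=(\Phi F)(z)=f(z)$, and uniqueness of the reproducing kernel forces $\widetilde K=K_{G}$.

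The main obstacle is the bookkeeping in the noncommutative setting: one must keep the weights $q$ and $\overline q$ on the correct side of the Clifford-valued kernel and of $f$ throughout, since the reproducing identity as stated places $K_{G}$ to the left of $f$ and $\overline{q(\zeta)}$ to the right. The precise exponent $m+1$ in the definition of $q$ is exactly what makes the scalar factor $\overline{q(w)}\,q(w)=|cw+d|^{-2m}$ cancel the surface Jacobian $|cw+d|^{2m}$; any other exponent would leave a residual weight and spoil the reproducing property. The only genuinely analytic point is confirming that the conformal distortion of $T$ is $|cw+d|^{-2}$ (using the normalisation $a\tilde d-b\tilde c=1$ of condition (ii)) and that the pulled-back boundary-measure identity holds on the smooth boundary; everything else is algebraic manipulation inside $\mathcal{C}l_{0m}$.
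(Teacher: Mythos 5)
The paper never proves this theorem: it is quoted as the ``starting point'' with references to Cnops' thesis and book and to Krau{\ss}har's Diplomarbeit, so there is no in-paper argument to compare against. Your proof is essentially the standard derivation found in those sources: take the isometric isomorphism $\Phi F = q\cdot(F\circ T)$ between the two Hardy spaces as the basic input, check that the candidate $q(z)K_{G^*}(z^*,\zeta^*)\overline{q(\zeta)}$ reproduces every $f=\Phi F$, and conclude by uniqueness of the reproducing kernel. The computation is sound: $\overline{q(w)}\,q(w)=|cw+d|^{-2m}$ (using that $cw+d$ is a product of paravectors by the Vahlen conditions, so $(cw+d)\overline{(cw+d)}=|cw+d|^2$ is a nonnegative scalar) exactly cancels the boundary Jacobian $dS(w^*)=|cw+d|^{-2m}\,dS(w)$, and you have placed the weights on the correct sides so that $\overline{q(w)}$ meets $q(w)$ immediately and their scalar product can be moved freely. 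Two points should be made explicit to close the argument. First, invoking uniqueness of the reproducing kernel requires checking that for each fixed $z$ the function $w\mapsto\overline{\widetilde K(z,w)}=q(w)\,\overline{K_{G^*}(z^*,w^*)}\,\overline{q(z)}$ actually lies in $H^2(\partial G,\mathcal{C}l_{0m})$; this follows by applying $\Phi$ to $\overline{K_{G^*}(z^*,\cdot)}\in H^2(\partial G^*,\mathcal{C}l_{0m})$ and using that the Hardy space is a right $\mathcal{C}l_{0m}$-module, so right multiplication by the constant $\overline{q(z)}$ stays inside it. Second, the surjectivity of $\Phi$, which you use to write an arbitrary $f$ as $\Phi F$, deserves a sentence: it comes from the inverse M\"obius transformation inducing the inverse map. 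With these additions your argument is a complete and correct proof of the cited formula.
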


To proceed we follow the theory of \cite{Kran04} and introduce in the same spirit

\begin{definition}
\begin{enumerate}
 \item  A Hermitian metric on a domain $G$ is a continuous function $\lambda$ on $G$, which is positive up to isolated zeros.
\item The length of a piecewise continuously differentiable path $C$ in $G$ with parameterisation $\gamma : [a,b] \to G$ with respect to a Hermitian metric $\lambda$ is defined by
\[
  L_{\lambda}(C) = \int_{C} \lambda(z) \vert dz \vert.
\]
 From now on, we denote Hermitian metrics by
\[
  ds = \lambda(z) |dz|
\]
and call $ds$ the metric's \textit{line element}.
\end{enumerate}
\end{definition}

\begin{definition}
The distance function $d: G \times G \to \R$ that is associated to a Hermitian metric $ds$ is defined by
\[
 d(z_1, z_2) = \inf \left\lbrace L_{ds}(C) : C \text{ path in G from } z_1 \text{ to } z_2 \right\rbrace.
\]
\end{definition}

The proof that $d$ is a distance function is analogue to the complex case, which is described in \cite{FL}.

\begin{definition}
 A Hermitian metric $ds = \lambda(z) |dz|$ is called \textit{regular}, if $\lambda$ is at least twice continuously differentialble and does not have any zeros.
\end{definition}
Finally, one introduces the Gau{\ss}ian curvature as usual by
\begin{definition}
 Let $ds = \lambda(z) |dz|$ be a regular metric. Then the function
\[
   \mathfrak{G}_{ds}(z):=-\frac{1}{\lambda(z)^2} \Delta \log \lambda(z)
\]
is called the (Gau\ss ian) curvature of the metric $ds$.
\end{definition}
This naturally leads to the following definition of the Szeg\"o metric in the context of Hardy spaces of monogenic functions:
\begin{definition}
Let $G \subset \R^{m+1}$ be a domain and $K_G$ the Szeg\"o kernel of this domain. The Hermitian metric, which is defined by
\[
   ds_G := K_G (z,z) \vert dz \vert,
\]
is called the \it{Szeg\"o metric}.
\end{definition}
To establish that $ds_G$ actually is a metric we only have to
verify that $K_G (z,z)$ is a positive real valued expression. Since $K_G$ is the reproducing kernel of the Hardy space, we can conclude
\[
  K_G(z,z) = (K_G(z, \cdot), K_G(z, \cdot)) = \Vert K(z, \cdot) \Vert,
\]
where $\Vert \cdot \Vert$ is the usual $L_2$-norm induced by the inner product on $L^2(G)$. $K_G(z,\cdot)$ cannot be the zero function, because otherwise the Hardy space would be degenerated. Therefore, $K_G(z,z)$ is real valued and positive for any $z$. Thus, $ds_G$ is indeed a well-defined metric.
\par\medskip\par
Our starting point is to prove.
\begin{theorem}
The Szeg\"o metric is invariant under M\"obius transformations up to the automorphy factor
\[
\vert cz + d \vert^{2m-2}.
\]
\end{theorem}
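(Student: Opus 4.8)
The plan is to specialize the transformation formula of Theorem~1 to the diagonal $\zeta = z$ and then exploit the scalar nature of the diagonal Szeg\"o kernel. Setting $\zeta = z$ (so that $\zeta^* = z^*$) in the formula gives
\[
K_G(z,z) = \frac{\overline{cz+d}}{|cz+d|^{m+1}}\, K_{G^*}(z^*,z^*)\, \frac{cz+d}{|cz+d|^{m+1}}.
\]
As established in the discussion preceding the statement, $K_{G^*}(z^*,z^*)$ is a non-negative real number. Being a scalar, it commutes with the Clifford-valued prefactors, so the two automorphy factors may be collected together.

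First I would invoke the Vahlen-group property of the M\"obius coefficients: since $a,b,c,d$ are products of paravectors satisfying conditions (i)--(iii), the element $cz+d$ lies in the Clifford group (or is zero) whenever $z \in \mathbb{R}^{m+1}$, whence $\overline{cz+d}\,(cz+d)$ is a non-negative real number equal to $|cz+d|^2$. Using this together with the commutativity just noted,
\[
K_G(z,z) = K_{G^*}(z^*,z^*)\,\frac{\overline{cz+d}\,(cz+d)}{|cz+d|^{2(m+1)}} = K_{G^*}(z^*,z^*)\,\frac{|cz+d|^2}{|cz+d|^{2m+2}} = |cz+d|^{-2m}\,K_{G^*}(z^*,z^*).
\]

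It then remains to compare the Euclidean line elements. A M\"obius transformation in $\mathbb{R}^{m+1}$ is conformal with conformal factor $|cz+d|^{-2}$; concretely $|dz^*| = |cz+d|^{-2}\,|dz|$, equivalently $|dz| = |cz+d|^{2}\,|dz^*|$. Substituting both relations into the Szeg\"o line element yields
\[
ds_G = K_G(z,z)\,|dz| = |cz+d|^{-2m}\, K_{G^*}(z^*,z^*)\cdot |cz+d|^{2}\,|dz^*| = |cz+d|^{2-2m}\,ds_{G^*},
\]
so that $ds_{G^*} = |cz+d|^{2m-2}\,ds_G$, which is exactly the claimed pseudo-invariance with automorphy factor $|cz+d|^{2m-2}$.

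I expect the only genuinely delicate points to be the two facts imported from the Vahlen/Ahlfors theory: that $\overline{cz+d}\,(cz+d)$ is a real scalar, so that the non-commuting factors collapse cleanly and $|cz+d|$ is meaningful, and that the conformal factor of $T$ is precisely $|cz+d|^{-2}$. Both are standard for Clifford-valued M\"obius transformations, but they are where the higher-dimensional subtlety is concentrated; the rest is bookkeeping of exponents.
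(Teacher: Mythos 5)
Your proof is correct and follows essentially the same route as the paper: both specialize the Szeg\"o kernel transformation formula to the diagonal, use the fact that $K(z,z)$ is a real scalar to commute it past the Clifford automorphy factors, collapse $\overline{cz+d}\,(cz+d)$ to $|cz+d|^2$, and combine with the conformal line-element factor $|cz+d|^{-2}$ to obtain the exponent $2m-2$. The only difference is presentational (you compute $K_G(z,z)=|cz+d|^{-2m}K_{G^*}(z^*,z^*)$ first and then insert $|dz|$, whereas the paper runs the whole chain starting from $ds_{T(G)}$), which is immaterial.
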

\begin{proof}
Let $T$ be a M\"obius transformation, expressed through $\left( \begin{array}{lr} a & b \\ c & d \end{array} \right) $ with Clifford numbers $a, b, c, d$ satisfying the usual constraints as mentioned above.\\
Then
\begin{eqnarray*}
  ds_{T(G)} (z) & = & K_{T(G)}(T(z),T(z)) \vert dT(z) \vert \\
  & = & K_{T(G)}(T(z),T(z)) \frac{1}{\vert cz + d \vert^{2}} \vert dz \vert \\
  & = & \vert cz + d \vert^{2m-2} \underbrace{K_{T(G)}(T(z),T(z))}_{\in \R} \frac{\overline{cz + d}}{\vert cz + d \vert^{m+1}} \frac{cz + d}{\vert cz + d \vert^{m+1}} \vert dz \vert\\
  & = & \vert cz + d \vert^{2m-2} \frac{\overline{cz + d}}{\vert cz + d \vert^{m+1}} K_{T(G)}(T(z),T(z)) \frac{cz + d}{\vert cz + d \vert^{m+1}} \vert dz \vert \\
  & = & \vert cz + d \vert^{2m-2} K_G(z,z) \vert dz \vert,
\end{eqnarray*}
where we apply the transformation formula for the Szeg\"o kernel
in the last line.
\end{proof}

\begin{remark}
 In the complex case, i.e. $m=1$, the automorphic factor simplifies to $1$ and the metric thus is completely  invariant under M\"obius transformations.
\end{remark}

For the sake of readability and better comparability to the complex case, we introduce the following notation:\\
For a continuously differentiable function $f$ we define
\[
  f' := \bar{D} f .
\]
Here, an in all that follows, $D$ stands for $D_z :=\frac{\partial }{\partial x_0} + \sum\limits_{i=1}^m e_i \frac{\partial }{\partial x_i}$. We will leave out the index when the variable of differentiation is unambiguous.\\
Next we need the following generalized product role:
\begin{theorem}\label{produktregel}
Let $f$ and $g \in C^1(\Omega, \mathcal{C}l_{0,m})$. Then
\[
  D(fg) = (Df)g + \bar{f}(Dg) + 2R(f)(\partial_0 g) + 2 \sum_{i=1}^m \sum_{ \begin{array}{c} \scriptstyle A \subseteq \{1,\ldots, m\}, \\ \scriptstyle  i \notin A : |A| \equiv_4 0, 1 \\ \scriptstyle i \in A : |A| \equiv_4 2, 3 \end{array}  } f_A (\partial_i g)
\]
and
\[
  (fg)D = (fD)\bar{g} + f(gD) + 2(f \partial_0)R(g) +  2 \sum_{i=1}^m \sum_{\begin{array}{c} \scriptstyle A \subseteq \{1,\ldots, m\}, \\ \scriptstyle  i \notin A : |A| \equiv_4 0, 1 \\ \scriptstyle i \in A : |A| \equiv_4 2, 3 \end{array}} g_A (\partial_i f),
\]
as well as
\[
  \overline{D}(fg) = (\overline{D}f)g + \bar{f}(\overline{D}g) + 2R(f)(\partial_0 g) - 2\sum_{i=1}^m \sum_{\begin{array}{c} \scriptstyle A \subseteq \{1,\ldots, m\}, \\ \scriptstyle  i \notin A : |A| \equiv_4 0, 1 \\ \scriptstyle i \in A : |A| \equiv_4 2, 3 \end{array}} f_A (\partial_i g)
\]
and
\[
  (fg)\overline{D} = (f\overline{D})\bar{g} + f(g\overline{D}) + 2(f \partial_0)R(g) -  2 \sum_{i=1}^m \sum_{\begin{array}{c} \scriptstyle A \subseteq \{1,\ldots, m\}, \\ \scriptstyle  i \notin A : |A| \equiv_4 0, 1 \\ \scriptstyle i \in A : |A| \equiv_4 2, 3 \end{array}} g_A (\partial_i f),
\]
where we use the notation $R(z):=z - \Sc(z)$ for a Clifford number $z$.
\end{theorem}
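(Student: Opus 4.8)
The plan is to derive all four identities from a single mechanism: the ordinary Leibniz rule for the real partial derivatives $\partial_0,\partial_1,\dots,\partial_m$, followed by a purely algebraic bookkeeping of the signs produced when a generator $e_i$ is commuted past a basis blade $e_A$, and when $e_A$ is replaced by its conjugate $\bar{e}_A$. By $\R$-bilinearity of the Clifford product it suffices to prove the identities coefficientwise, so I would first expand
\[
  D(fg)=\partial_0(fg)+\sum_{i=1}^m e_i\,\partial_i(fg)
\]
and apply $\partial_\mu(fg)=(\partial_\mu f)g+f(\partial_\mu g)$ to each real derivative. This step is legitimate precisely because every $\partial_\mu$ is a scalar derivation and the (noncommutative) product is bilinear, so the blades $e_A$ are merely constants that pass through.

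The terms in which the derivative falls on $f$ assemble immediately into $(Df)g$, since in both $D(fg)$ and $(Df)g$ the generator $e_i$ multiplies $(\partial_i f)$ from the left in exactly the same position. What remains is the defect $f(\partial_0 g)+\sum_{i=1}^m e_i f(\partial_i g)$, which I want to match against $\bar{f}(Dg)=\bar{f}(\partial_0 g)+\sum_{i=1}^m \bar{f}e_i(\partial_i g)$. Subtracting, the entire problem collapses to understanding the two Clifford quantities $f-\bar{f}$ and $e_i f-\bar{f}e_i$ homogeneously in the grading: the first governs the $\partial_0$-correction (which is then recorded as $2R(f)(\partial_0 g)$, with $R(f)=f-\Sc(f)$), and the second governs each $\partial_i$-correction.

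The heart of the proof is this grade analysis. Writing $f=\sum_A f_A e_A$ and using $\bar{e}_A=(-1)^{|A|(|A|+1)/2}e_A$ together with the commutation rule $e_i e_A=(-1)^{|A|}e_A e_i$ when $i\notin A$, and the grade-lowering relation that comes from $e_i^2=-e_0$ when $i\in A$, each coefficient acquires a sign that depends only on $|A|\bmod 4$ and on whether $i\in A$. Computing the product of the conjugation sign with the (anti)commutation sign and reducing it modulo $4$ is what selects the residue classes $|A|\equiv_4 0,1$ versus $|A|\equiv_4 2,3$ appearing in the statement and produces the overall factor $2$ (the vanishing of the complementary residue classes is exactly the statement that those coefficients cancel).

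Finally, I would obtain the $\bar{D}$-identities from the $D$-identities by the single substitution $e_i\mapsto -e_i$ in the directional part of the operator, which reverses the sign of every $\partial_i$-correction and hence turns the $+2\sum$ into $-2\sum$; and the two right-operator identities $(fg)D$, $(fg)\bar{D}$ follow by running the same computation with the operator acting from the right, equivalently by applying the conjugation anti-automorphism (which satisfies $\overline{ab}=\bar{b}\,\bar{a}$) to the left-operator identities. I expect the main obstacle to be entirely combinatorial rather than analytic: keeping the sign coming from Clifford conjugation cleanly separated from the sign coming from (anti)commuting $e_i$, and in particular correctly handling the grade reduction caused by $e_i^2=-e_0$ in the case $i\in A$, since this is exactly the place where the relevant residue classes are interchanged relative to the case $i\notin A$.
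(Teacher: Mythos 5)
Your proposal follows essentially the same route as the paper's proof: expand $D(fg)$ with the scalar Leibniz rule for each real partial, absorb the terms where the derivative hits $f$ into $(Df)g$, and account for the remaining defect between $f(\partial_0 g)$, $e_i f(\partial_i g)$ and $\bar f(\partial_0 g)$, $\bar f e_i(\partial_i g)$ via the signs $e_ie_A=(-1)^{|A|}e_Ae_i$ (resp.\ $(-1)^{|A|-1}e_Ae_i$ for $i\in A$) and $\overline{e_A}=\pm e_A$ according to $|A|\bmod 4$ --- which is exactly the sign bookkeeping the paper supplies before deferring the remaining skeleton to the quaternionic argument of \cite{GS90}. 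Your derivation of the $\overline{D}$-versions by negating the vector part of the operator and of the right-hand versions by the conjugation anti-automorphism matches the paper's ``treated analogously'' remark, so the approaches coincide.
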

\begin{proof}
The proof follows almost along the same lines as the proof for the case of quaternionic vectors, which can be found in \cite{GS90}.  Therefore, we will only give the additional, necessary arguments for the proof of the first equation (the other three cases can then be treated analogously).\\
By applying simple calculations, we obtain that 
\begin{eqnarray*}
 \partial_0 (fg) &=&  (\partial_0 f)g + f(\partial_0 g) \\
&=& (\partial_0 f)g + \overline{f}(\partial_0 g) + 2\;R(f)(\partial_0 g).
\end{eqnarray*}
Furthermore, notice that
\begin{equation*}
 e_i e_A = \left\lbrace \begin{array}{l}
            (-1)^{|A|} e_A e_i \; \text{for} \; i \notin A  \\
            (-1)^{|A|-1} e_A e_i  \; \text{for} \; i \in A  \\
           \end{array} \right.
\end{equation*}
holds for any $i \in \{1,\ldots , m \}$ and any $A \subseteq \{1,\ldots , m \}$.\\
In view of 
\begin{equation*}
 \overline{e_A} = \left\lbrace \begin{array}{l}
             e_A \; \text{for} \; |A| \equiv_4 0, 3  \\
             -e_A \; \text{for} \; |A| \equiv_4 1, 2 \\
           \end{array} \right. ,
\end{equation*}
we obtain $e_i e_A = \overline{e_A} e_i$ for all choices of $A$ except for $i \notin A , |A| \equiv_4 0, 1$ and $i \in A , |A| \equiv_4 2, 3$. For details concerning the calculations with imaginary units see also \cite{BDS}.
\end{proof}


 From now on, we denote by
\[
  \partial^z_i, \qquad i \in \{0, \ldots, m \}
\]
the real partial derivative with respect to the $i$-th component and the variable $z$.

\section{The curvature of the Szeg\"o metric}

The main aim of this section is to show that the Szeg\"o metric is
negative on bounded domains. This provides us with a generalization of the well-known analogous statement in classical complex analysis, as presented for instance in \cite{FL}. This result will then further be applied in the following section where we prove that the Szeg\"o metric is complete. 

To proceed in this direction we first need to prove the following

\begin{lemma}\label{drei}
 Let $G$ be a domain and $\left( \phi_k(z) \right)_{k \in \N}$ be an orthonormal basis of the Hardy space over $G$. Then
\[
  \sum_{k=1}^{\infty} \vert \phi'_k(z) \vert^2 < \infty .
\]
\end{lemma}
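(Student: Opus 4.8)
The plan is to reduce the statement to the boundedness of a single linear functional and then to apply a Bessel-type inequality. Concretely, fix an interior point $z\in G$ and regard the evaluation of the derivative, $L_z(f):=f'(z)=\bar D f(z)$, as a map from the Hardy space $H^2(\partial G,\mathcal{C}l_{0m})$ into $\mathcal{C}l_{0m}$. The whole lemma follows once we know that $L_z$ is bounded, i.e. that there is a constant $C(z)<\infty$ with $|\bar D f(z)|\le C(z)\,\|f\|_{H^2}$ for every $f\in H^2$. This is the higher-dimensional counterpart of the classical fact that in the complex case $\sum_k|\phi_k'(z)|^2$ is the value on the diagonal of a mixed derivative of the reproducing kernel, and hence finite.

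Granting boundedness, I would argue as follows. For a fixed $N$ put $g_N:=\sum_{k=1}^N \phi_k\,\overline{\phi_k'(z)}\in H^2$, a finite right-linear combination of basis elements with constant Clifford coefficients. Since $\bar D$ commutes with right multiplication by a constant Clifford number, $\bar D g_N(z)=\sum_{k=1}^N\phi_k'(z)\,\overline{\phi_k'(z)}$, whose scalar part is $S_N:=\sum_{k=1}^N|\phi_k'(z)|^2$. Using the reproducing inner product together with the orthonormality $(\phi_j,\phi_k)=\delta_{jk}$ one checks in the same way that $\|g_N\|_{H^2}^2=\Sc\big((g_N,g_N)\big)=S_N$. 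Combining these two computations with the boundedness of $L_z$ and the elementary inequality $\Sc(w)\le|w|$ gives
\[
  S_N=\Sc\big(\bar D g_N(z)\big)\le\big|\bar D g_N(z)\big|\le C(z)\,\|g_N\|_{H^2}=C(z)\sqrt{S_N},
\]
hence $S_N\le C(z)^2$ for every $N$. Letting $N\to\infty$ yields $\sum_{k=1}^\infty|\phi_k'(z)|^2\le C(z)^2<\infty$.

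The real work, and the step I expect to be the main obstacle, is establishing the boundedness of $L_z$. Here I would invoke the interior regularity of monogenic functions. Choose $r>0$ with $\overline{B(z,r)}\subset G$ and use Cauchy's integral formula for monogenic functions (see e.g. \cite{DSS,BDS}) to represent $f$ on $B(z,r)$ by a boundary integral over $\partial B(z,r)$ against the Cauchy kernel $E(x)=\tfrac{1}{\omega_m}\tfrac{\bar x}{|x|^{m+1}}$. Since the kernel is smooth away from its singularity, one may differentiate under the integral sign, which expresses $\bar D f(z)$ as an integral over $\partial B(z,r)$ of a bounded kernel against $f$; this yields an estimate of the form $|\bar D f(z)|\le C_r\sup_{\partial B(z,r)}|f|$. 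Each point $w\in\partial B(z,r)$ lies in the interior of $G$, so the Bergman condition $|f(w)|\le C(w)\,\|f\|_{H^2}$ applies; since $C(w)=\sqrt{K_G(w,w)}$ is continuous, hence bounded, on the compact sphere $\partial B(z,r)$, we obtain $\sup_{\partial B(z,r)}|f|\le M(z)\,\|f\|_{H^2}$. Chaining the two bounds produces the desired $|\bar D f(z)|\le C(z)\,\|f\|_{H^2}$, completing the reduction. The only points requiring care are the justification of differentiation under the integral sign and the uniform control of the point-evaluation constant over the compact sphere.
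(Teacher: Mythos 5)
Your proof is correct, but it takes a genuinely different route from the paper's. The paper argues directly on the series expansion $K(z,w)=\sum_k\phi_k(z)\overline{\phi_k(w)}$ of the Szeg\"o kernel: it applies $\overline{D}_z$ and the combination $(\overline{D}_z+\overline{D}_w)\,\cdot\,(D_z+D_w)$ term by term, invokes Weierstra{\ss}' convergence theorem for the compactly convergent series, expands the resulting products with the generalized Leibniz rule of Theorem~\ref{produktregel}, and extracts the convergence of $\sum_k\phi_k'(z)\overline{\phi_k'(w)}$ on the diagonal from the finiteness of $\overline{D}_zK(z,z)D_z$. You instead reduce everything to the boundedness of the single derivative-evaluation functional $L_z(f)=\overline{D}f(z)$ on $H^2$ and then run the standard Bessel/duality argument with the test functions $g_N=\sum_{k\le N}\phi_k\,\overline{\phi_k'(z)}$, which gives the clean chain $S_N=\Sc(\overline{D}g_N(z))\le C(z)\sqrt{S_N}$ and hence the explicit uniform bound $S_N\le C(z)^2$. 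Your route buys three things: it avoids the least transparent steps of the paper's argument (justifying term-by-term differentiation of the kernel series and isolating a convergent sub-series from a convergent aggregate), it needs neither the product rule of Theorem~\ref{produktregel} nor any kernel manipulation, and it yields a quantitative identification of $\sum_k|\phi_k'(z)|^2$ with the squared norm of $L_z$, which is exactly the quantity $K_{\bar zz}(z,z)$ exploited later in Lemma~\ref{vier}. The interior estimate $|\overline{D}f(z)|\le C_r\sup_{\partial B(z,r)}|f|\le C(z)\|f\|_{H^2}$ via the Cauchy integral formula, differentiation under the integral sign, and continuity of $w\mapsto K(w,w)$ on the compact sphere is sound; the only conventions to keep explicit are that the orthonormality $(\phi_j,\phi_k)=\delta_{jk}$ is meant in the Clifford-valued sense (which the paper's kernel expansion already presupposes) and that $\overline{D}$ commutes with right multiplication by constant Clifford numbers so that $g_N$ again lies in the Hardy module --- both of which you use correctly.
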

\begin{proof}
 Let $K=K_G$ be the Szeg\"o kernel of $G$. Due to the monogenicity resp. anti-monogenicity in the components and the compact convergence of the series expansion, we can infer that
\begin{equation}\label{eins}
 \overline{D} K(z,w) = \sum_{k=1}^{\infty} \phi'_k(z) \overline{\phi_k(w)} < \infty ,
\end{equation}
and the series is again compactly convergent, as a consequence of Weierstra\ss ' convergence theorem .\\
For an arbitrary $k \in \N$ we have
\begin{eqnarray}\label{zwei}
 ( \overline{D_z} + \overline{D_w} ) \phi_k (z) \phi_k (w) (D_z + D_w) &=& \phi'_k (z) \overline{\phi_k (w)}D_z + \overline{D_w} \phi_k (z) \overline{\phi_k (w)} D_z \\
&& + \overline{D_w} \phi_k (z) \overline{\phi'_k (w)} + \phi'_k (z) \phi'_k (w) ,
\end{eqnarray}
as well as (in view of Theorem \ref{produktregel})
\begin{eqnarray*}
 \phi'_k (z) \overline{\phi_k (w)}D_z &=& (\phi'_k(z)D_z) \phi_k (w) + \phi'_k (z) (\underbrace{\overline{\phi_k (w)} D_z}_{=0} ) \\
&&  + 2 ( \partial^z_0 \phi'_k (z) \overline{\phi_k (w)} ) + 2   \sum_{i=1}^m \sum_{\begin{array}{c} \scriptstyle A \subseteq \{1,\ldots, m\}, \\ \scriptstyle  i \notin A : |A| \equiv_4 0, 1 \\ \scriptstyle i \in A : |A| \equiv_4 2, 3 \end{array}} ((\phi'_k (z)) \partial^z_i) (\overline{\phi_k (w)} )_A  \\
\bar{D_w} \phi_k (z) \overline{\phi'_k (w)} &=& \underbrace{(\bar{D}_w \phi_k (z))}_{=0} + \overline{ \phi_k (z) } \underbrace{(\bar{D}_w)  \overline{ \phi'_k (z) } }_{= \overline{\phi'_k (w)D_w}=0} \\
&& + 2 Vec(\phi_k (z)) (\partial_0^w \overline{\phi'_k (w)}) +  2 \sum_{i=1}^m \sum_{A} (\phi_k (z))_A \partial_i^w \overline{ \phi'_k (w) }\\
 \bar{D}_w \phi_k (z) \overline{\phi_k (w)} D_z &=& \left[  (\bar{D}_w \phi_k (z)) \overline{\phi_k (w)} + \overline{\phi_k (z)} \overline{\phi_k (w) D_w} \right. \\
&& \left.  + 2 Vec(\phi_k (z)) (\partial^w_0 \overline{\phi_k (w))} + 2 \sum_{i=1}^m \sum_{A} (\phi_k (z))_A \partial_i^w \overline{\phi_k (w)} \right] D_z\\
&=& \left[ + 2 Vec(\phi_k (z)) (\partial^w_0 \overline{\phi_k (w))} + 2 \sum_{i=1}^m \sum_{A} (\phi_k (z))_A \partial_i^w \overline{\phi_k (w)} \right] D_z,\\
\end{eqnarray*}
where each sum is taken over the same set of $A$s.\\
Because of
\[
 \lim_{w \to z } \sum_{k=1}^{\infty} ( \overline{D_z} + \overline{D_w} ) \phi_k (z) \phi_k (w) (D_z + D_w)  = \overline{D_z} K(z,z) D_z < 0
\]
and in view of (\ref{zwei}) it suffices to show that the sums in the expanded terms of
\begin{eqnarray*}
(\phi'_k(z)D_z) \phi_k (w)\\
\phi'_k (z) \overline{\phi_k (w)}D_z, \\
\overline{D_w} \phi'_k (w) \overline{\phi'_k (w)}, \\
 \overline{D}_w \phi_k (z) \overline{\phi_k (w)} D_z, \\
\end{eqnarray*}
converge in each case.
Both the series expansion of $K$ and of (\ref{eins}) are convergent and well-defined. As $K$ is the Szeg\"o kernel, $K$ is infinitely many times continuously real differentiable, as well as (\ref{eins}). Thus, we are able to  differentiate partially with respect to any component in any order as many times as we want or can apply the Cauchy-Riemann operator on $K$ and (\ref{eins}). As both series converge compactly, we can switch these actions with the summation due to Weierstra\ss ' Theorem. So the summation of the terms above yields to a convergent series in each case.
\end{proof}

 From now on, we will leave out the index and just write $K$ for the Szeg\"o kernel, if it is clear which is the corresponding domain of the kernel.\\
We introduce further notation
\[
  K_z := \overline{D}_z K(z, \cdot)
\]

\[
  K_{\bar{z}} := K(z, \cdot) D_z
\]

\begin{lemma}\label{vier}
 Let $G$ be a bounded domain and $K$ be the Szeg\"o kernel of $G$. Then
\[
  K(z,z)( K(z,z) K_{\bar{z}z}(z,z) - K_z(z,z) K_{\bar{z}}(z,z)) > 0 .
\]
\end{lemma}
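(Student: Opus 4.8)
The plan is to reduce the stated inequality to a Cauchy--Schwarz estimate in the Clifford-valued inner product of the Hardy space, using the orthonormal basis $(\phi_k)_{k\in\N}$ from Lemma~\ref{drei}. First I would record the expansions $K(z,w)=\sum_k\phi_k(z)\overline{\phi_k(w)}$ and, by (\ref{eins}), $\overline{D}_zK(z,w)=\sum_k\phi'_k(z)\overline{\phi_k(w)}$; evaluating on the diagonal and differentiating in the appropriate slot yields
\[
K(z,z)=\sum_k\phi_k(z)\overline{\phi_k(z)},\qquad K_{\bar z z}(z,z)=\sum_k\phi'_k(z)\overline{\phi'_k(z)},
\]
\[
K_z(z,z)=\sum_k\phi'_k(z)\overline{\phi_k(z)},\qquad K_{\bar z}(z,z)=\sum_k\phi_k(z)\overline{\phi'_k(z)}.
\]
All four series converge by Lemma~\ref{drei}. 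The key device is to read these as inner products: I introduce the two Hardy-space elements $u:=\sum_k\phi_k\,\overline{\phi_k(z)}$ and $v:=\sum_k\phi_k\,\overline{\phi'_k(z)}$, whose right Clifford coefficients are square-summable (again Lemma~\ref{drei}), so that $u,v\in H^2$.

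Using orthonormality $(\phi_j,\phi_k)=\delta_{jk}$ and moving the constant right coefficients out of $(f,g)=\int_{\partial G}\overline f g\,dS$ (the left coefficient getting conjugated), I would verify the identifications $(u,u)=K(z,z)$, $(v,v)=K_{\bar z z}(z,z)$, $(v,u)=K_z(z,z)$ and $(u,v)=K_{\bar z}(z,z)$. Here $(u,u)$ and $(v,v)$ are real and positive, and $(v,u)=\overline{(u,v)}$ since $(g,f)=\overline{(f,g)}$. Consequently the bracket in the statement becomes
\[
K(z,z)K_{\bar z z}(z,z)-K_z(z,z)K_{\bar z}(z,z)=(u,u)(v,v)-\overline{(u,v)}\,(u,v),
\]
whose scalar part equals $(u,u)(v,v)-\Vert(u,v)\Vert^{2}$.

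The positivity is then exactly a Clifford Cauchy--Schwarz inequality. I would prove $\Vert(u,v)\Vert^{2}\le(u,u)(v,v)$ by completing the square: set $\lambda:=(u,u)^{-1}(u,v)$ (legitimate since $(u,u)\in\R_{>0}$ is central and invertible) and $h:=v-u\lambda$. Expanding the non-negative real number $(h,h)=\int_{\partial G}\Vert h\Vert^{2}\,dS\ge0$ and using that $(u,u)$ is a real scalar collapses the cross terms to give $(h,h)=(v,v)-(u,u)^{-1}\overline{(u,v)}\,(u,v)\ge0$, i.e. $\Vert(u,v)\Vert^{2}\le(u,u)(v,v)$. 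Multiplying the resulting inequality for the bracket by $K(z,z)=(u,u)>0$ produces the asserted sign.

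The main obstacle is upgrading this to a \emph{strict} inequality, which is where boundedness of $G$ enters. Equality in the Cauchy--Schwarz step forces $h=0$, i.e. $v=u\lambda$; comparing coefficients in the orthonormal basis this means $\phi'_k(z)=\mu\,\phi_k(z)$ for all $k$ with a fixed $\mu:=\overline\lambda$, hence $\overline D f(z)=\mu f(z)$ for every $f\in H^2$. Testing on the constant function $1\in H^2$ (monogenic and square-integrable on $\partial G$ because $G$ is bounded) forces $\mu=0$, and then $f'(z)=0$ for all $f\in H^2$, contradicting the existence of monogenic functions (e.g. degree-one Fueter polynomials) with non-vanishing derivative at $z$. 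Hence the inequality is strict and $K(z,z)\bigl(K(z,z)K_{\bar z z}(z,z)-K_z(z,z)K_{\bar z}(z,z)\bigr)>0$. I expect the only delicate points beyond this to be the careful bookkeeping of left/right Clifford coefficients in the inner-product identifications and the remark that the non-scalar contributions are immaterial for the sign, since only the scalar part of the expression feeds into the subsequent curvature formula.
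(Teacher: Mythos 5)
Your proposal is correct, and its core coincides with the paper's argument: the paper also obtains the bracket as the squared norm of a linear combination, namely $\tilde M := K_z(z,z)K(z,\cdot)-K(z,z)K_{\bar z}(z,\cdot)$, computes $\langle\tilde M,\tilde M\rangle_w = K(z,z)\bigl(K(z,z)K_{z\bar z}(z,z)-K_z(z,z)K_{\bar z}(z,z)\bigr)\ge 0$, and then rules out $\tilde M\equiv 0$. Your completion of the square with $h=v-u\lambda$ is just this same Gram-determinant computation normalized differently, so up to that point the two proofs are essentially identical (and you are, if anything, more careful than the paper in flagging that only the scalar part of the Clifford-valued products is being controlled). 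Where you genuinely diverge is the strictness step. The paper pairs $\tilde M$ against the function $w\mapsto w-z$ and evaluates $\langle K_{\bar z}(z,\cdot),w-z\rangle=\overline{D}_w(w-z)\vert_{w=z}=m+1$ to reach a contradiction; note that $w-z$ is not monogenic (since $D_w w = 1-m\neq 0$ for $m>1$), so this test function does not obviously lie in the Hardy space, which makes that step delicate. You instead extract from the equality case the relation $\overline{D}f(z)=\mu f(z)$ for all $f\in H^2$, kill $\mu$ by testing on the constant $1$, and then contradict $\overline{D}f(z)\equiv 0$ with the Fueter-type polynomial $\mathfrak{Z}_1$ (which the paper itself uses later, in the remark on $d_C>0$, and which does belong to $H^2$ on a bounded domain). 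Your route buys a cleaner and arguably sounder degeneracy argument using only legitimate Hardy-space elements, at the cost of invoking the orthonormal-basis expansion and the locally uniform convergence needed to differentiate it termwise; the paper's route is shorter but leans on a test function outside the space it is pairing against.
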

\begin{beweis}
First, we show that $K_z(z,\cdot)$ is again an element of the Hardy space.\\
For that matter:\\
As $K$ is monogenic in $z$, so is $K_z$. Furthermore, we have in
view of (\ref{eins}):
\begin{eqnarray*}
  \int_{G} \vert K_z (z, w) \vert^2 dV_w &=& \int_{G} \vert \sum_{k=1}^{\infty} \phi'_k(z) \overline{\phi_k(w)} \vert^2 dV_w  \\
&\leq& \int_{G} \sum_{k=1}^{\infty} \vert \phi'_k(z) \overline{\phi_k(w)} \vert^2 dV_w \\
&\leq & \int_{G} \left( \sum_{k=1}^{\infty} \vert \phi'_k(z) \vert^2 \right) \cdot \underbrace{\left( \sum_{k=1}^{\infty} \vert \overline{\phi_k(w)} \vert^2 \right)}_{=K(w,w)} dV_w \\
&=& \sum_{k=1}^{\infty} \vert \phi'_k(z) \vert^2 \cdot \int_{G}
K(w,w) dV_w .
\end{eqnarray*}
Here, we applied the H\"older inequality and in the last line we
applied Lemma \ref{drei} stating that $\sum_{k=1}^{\infty} \vert
\phi'_k(z) \vert^2 = K_{\bar{z}z} < \infty$.

As $K(z,z)$ is $L^2$-integrable in both components (because $K_z(z,\cdot)$ is an element of the Hardy space for any $z$ ), the assertion turns out to be true.\\
We consider the reproducing property:
\[
  f(z) = \int_{\partial G}  K(z,w) f(w) dS_w .
\]
Since $K_z(z,w)$ is again an element of the Hardy space, we can interchange the order of the conjugated $D$-operator with the integral and we obtain
\begin{eqnarray*}
 \bar{D}_z f(z)  &=& \int_{\partial G} \bar{D}_z K(z,w) f(w) dS_w \\
&=& \int_{\partial G} \overline{K(z,w) D_z } f(w) dS_w \\
&=&\int_{\partial G}  \overline{K_{\bar{z}}(z,w)} f(w) dS_w \\
&=& \langle K_{\bar{z}}(z,w), f(w) \rangle_w .
\end{eqnarray*}
Now let
\[
  \tilde{M} := K_z(z,z) K(z,w) - K(z,z) K_{\bar{z}}(z,w) .
\]
Then
\begin{eqnarray*}\label{norm_M}
  \langle \tilde{M}, \tilde{M} \rangle_w &=& \vert K_z(z,z) K(z,w) \vert^2 + \vert K(z,z) K_{\bar{z}}(z,w) \vert^2 \\
&& - K_z(z,z) K(z,z) \langle K(z,w), K_{\bar{z}}(z,w) \rangle \\
&& - K(z,z) \langle K_{\bar{z}}(z,w), K(z,w) \rangle \overline{K_z(z,z)} \\
&=&  K_z(z,z)^2 \underbrace{K(z,z)}_{\langle K(z,w), K(z,w) \rangle}  + K(z,z)^2 \langle K_{\bar{z}}(z,w) K_{\bar{z}}(z,w) \rangle \\
&& - K_z(z,z) K(z,z) \langle K(z,w), K_{\bar{z}}(z,w) \rangle \\
&& - K(z,z) \langle K_{\bar{z}}(z,w), K(z,w) \rangle \overline{K_z(z,z)} \\
&=&  K_z(z,z)^2 K(z,z) + K(z,z)^2 K_{z\bar{z}}(z,z) \\
&& - K_z(z,z) K(z,z) K_{\bar{z}}(z,z) - K(z,z)K_z(z,z) \overline{\underbrace{K_z(z,z)}_{\in \R}} \\
&=& K(z,z)^2 K_{z\bar{z}}(z,z) - K(z,z) K_z(z,z) K_{\bar{z}}(z,z) \\
&=& K(z,z) \left(  K(z,z) K_{z\bar{z}}(z,z) - K_z(z,z) K_{\bar{z}}(z,z) \right)  \\
\end{eqnarray*}
Suppose that $\tilde{M} \equiv 0$. Then we obtain that
\begin{eqnarray*}
 0 = \langle \tilde{M}, w- z \rangle_w & = & \langle  K_z(z,z) K(z,w) - K(z,z) K_{\bar{z}}(z,w), w- z \rangle_w \\
&=& K_z(z,z) \langle K(z, w), w- z \rangle - K(z,z) \langle K_{\bar{z}}(z, w), w- z \rangle  \\
&=&  K_z(z,z)(z - z) - K(z,z) \left[  \left. \overline{D}_w (w - z) \right|_{w=z} \right] \\
&=& (n+1) K(z,z) > 0 .
\end{eqnarray*}
This is a contradiction.
\end{beweis}

With these preliminary results we are able to prove the main result of this section:

\begin{theorem}\label{thm4}
 Let $G$ be a bounded domain. Then the curvature of the Szeg\"o metric is negative on $G$.
\end{theorem}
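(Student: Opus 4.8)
The plan is to unwind the definition of curvature and to recognise the result as the positive quantity furnished by Lemma~\ref{vier}. By the definition of the Gau\ss ian curvature, applied to the Szeg\"o metric $ds_G = K(z,z)\,\vert dz\vert$ with $\lambda(z)=K(z,z)$,
\[
   \mathfrak{G}_{ds_G}(z) = -\frac{1}{K(z,z)^2}\,\Delta\log K(z,z),
\]
and since $K(z,z)>0$ everywhere, the claim is equivalent to $\Delta\log K(z,z)>0$ for all $z\in G$. So the whole task reduces to proving this inequality.

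First I would treat the logarithm. As $K(z,z)$ is a strictly positive scalar function, the ordinary chain and quotient rules for the Euclidean Laplacian $\Delta=\sum_{i=0}^m(\partial_i^z)^2$ give
\[
   \Delta\log K(z,z)=\frac{1}{K(z,z)^2}\left(K(z,z)\,\Delta K(z,z)-\sum_{i=0}^m\bigl(\partial_i^z K(z,z)\bigr)^2\right),
\]
so it remains to show that the bracket on the right is positive.

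The heart of the argument is to rewrite this bracket through the kernel derivatives $K_z,K_{\bar z},K_{z\bar z}$. Using the factorisation $\Delta=\overline{D}D$ together with the chain rule, I would apply $D$ and then $\overline{D}$ to the diagonal function $F(z)=K(z,z)$, splitting each first order operator into its actions on the two slots of $K(z,w)$. Since $K(z,w)=\sum_k\phi_k(z)\overline{\phi_k(w)}$ is left monogenic, hence harmonic, in its first slot and harmonic in its second, the two purely single slot second derivatives vanish on the diagonal, so that $\Delta K(z,z)$ collapses to the mixed terms — the higher dimensional analogue of $\partial_z\partial_{\bar z}K$ — which combine to a positive multiple of $K_{z\bar z}(z,z)=\sum_k\vert\phi_k'(z)\vert^2$. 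In the same way $\sum_{i=0}^m(\partial_i^z K(z,z))^2=\vert DF\vert^2$ is identified, via the Hermitian symmetry $K(w,z)=\overline{K(z,w)}$ on the diagonal, with the corresponding positive multiple of $K_z(z,z)K_{\bar z}(z,z)$. Because of non-commutativity these manipulations must be carried out with the generalised product rule of Theorem~\ref{produktregel}, whose additional subset indexed sums have to be shown to cancel; the term by term differentiation of the kernel series is justified by the compact convergence established in Lemma~\ref{drei}.

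Once the bracket is written in the form $c\bigl(K(z,z)K_{z\bar z}(z,z)-K_z(z,z)K_{\bar z}(z,z)\bigr)$ with a constant $c>0$, Lemma~\ref{vier} concludes the proof: it states that $K(z,z)\bigl(K(z,z)K_{z\bar z}(z,z)-K_z(z,z)K_{\bar z}(z,z)\bigr)>0$, and dividing by $K(z,z)>0$ shows that the bracket, and hence $\Delta\log K(z,z)$, is strictly positive; therefore $\mathfrak{G}_{ds_G}(z)<0$ throughout $G$. I expect the main obstacle to be the reduction carried out in the preceding paragraph: the non-commutative bookkeeping that collapses $\Delta K(z,z)$ to the single mixed term and that pins down the exact positive constant relating $\vert DF\vert^2$ to $K_z(z,z)K_{\bar z}(z,z)$, since it is there that the extra sums produced by Theorem~\ref{produktregel} must be controlled.
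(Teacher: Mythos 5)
Your proposal is correct and follows essentially the same route as the paper: reduce the sign of the curvature to the positivity of $\Delta\log K(z,z)$, express that Laplacian via the quotient rule (the paper uses the factorisation $\Delta=\overline{D}D$ directly) as $\bigl(K(z,z)K_{z\bar z}(z,z)-K_z(z,z)K_{\bar z}(z,z)\bigr)/K(z,z)^2$, and conclude by Lemma~\ref{vier}. The non-commutative bookkeeping you flag as the main obstacle is exactly the step the paper compresses into the remark that the product rule simplifies because $K(z,z)$ is real-valued.
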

\begin{proof}
According to the definition of the Gau{\ss}ian curvature it suffices to show that
\[
  \Delta \log K^2 (z,z) > 0.
\]
Since $K(z,z)$ is real, the product formula for the $D$-operator simplifies and we determine that
\begin{eqnarray*}
 \Delta \log K^2 (z,z) &=& \bar{D} D K(z,z) \\
 &=& \bar{D} \frac{1}{K(z,z)} K_z(z,z) \\
 &=& \dfrac{K_{\bar{z}z}(z,z) K(z,z) - K_z(z,z) K(z,z) \bar{D} }{K(z,z)^2} \\
 &+& \dfrac{K_{\bar{z}z}(z,z) K(z,z) - K_z(z,z) K_{\bar{z}}(z,z) }{K(z,z)^2} \\
\end{eqnarray*}
Here we applied in the last line that $K(z,z) \in \R$ According to
Lemma \ref{vier}, the last expression is positive, whereby the
theorem is proven.
\end{proof}

\section{The completeness result}

In order to show that the Szeg\"o metric is complete on bounded domains we will introduce first another metric which will be called the Szeg\"o-Caratheodory metric and show first that the latter one is complete.
The following two propositions are needed for the proof of the completeness of the
Szeg\"o-Carath\'{e}odory metric. Proofs of analogous statements for the case dealing with functions in several complex variables can be found for instance in \cite{Kran04}, chapter 3. Although this book deals with the function theory in several complex variables, the proofs only make use of arguments from real differential calculus in $\R^m$ and can be directly transferred due to the isometric isomorphy between $\R^{2^m}$ and $\mathcal{C}l_{0m}$ as normed vector spaces.

\begin{theorem}\label{cara_help1}
If $U$ is a domain with $C^2$ boundary, then there is an open neighbourhood $W$ of $\partial U$ such that if $z \in U \cap W$, then there is a unique point $P = P(z) \in \partial U$ which has the shortest Euclidean distance to $z$.
\end{theorem}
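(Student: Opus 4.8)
The plan is to realise $W$ as a tubular (collar) neighbourhood of $\partial U$ and to identify $P(z)$ with the unique base point of the normal line through $z$. Since the domain is bounded, $\partial U$ is a compact $C^2$ hypersurface in $\R^{m+1}$, and therefore carries a well-defined outward unit normal field $\nu : \partial U \to S^m$ which is of class $C^1$ (locally $\nu = \nabla\rho/\|\nabla\rho\|$ for a $C^2$ defining function $\rho$). I would then introduce the normal map
\[
  \Phi : \partial U \times \R \longrightarrow \R^{m+1}, \qquad \Phi(p,t) = p + t\,\nu(p),
\]
which is of class $C^1$ and satisfies $\Phi(p,0)=p$.

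First I would show that $\Phi$ is a local diffeomorphism along the zero section. For a tangent vector $v \in T_p\partial U$ one has $D\Phi_{(p,0)}(v,0) = v$, while $D\Phi_{(p,0)}(0,1) = \nu(p)$; hence the image of the differential is $T_p\partial U \oplus \R\,\nu(p) = \R^{m+1}$, so $D\Phi_{(p,0)}$ is an isomorphism. By the inverse function theorem, $\Phi$ restricts to a $C^1$ diffeomorphism on a neighbourhood of each $(p,0)$. Using the compactness of $\partial U$, these local neighbourhoods can be made uniform: there is an $\varepsilon > 0$ such that $\Phi$ is injective on the tube $\partial U \times (-\varepsilon,\varepsilon)$ and maps it diffeomorphically onto an open neighbourhood $W$ of $\partial U$. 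The step upgrading the purely local injectivity supplied by the inverse function theorem to injectivity on the whole tube is the main obstacle; it is the classical tubular-neighbourhood argument, carried out by supposing there exist sequences of distinct pairs sharing the same image with normal parameters tending to $0$, and deriving a contradiction with local injectivity at a limit boundary point by compactness.

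With $W$ and $\varepsilon$ fixed, I would conclude via the first-order optimality condition. Let $z \in U \cap W$ and put $\delta = \mathrm{dist}(z,\partial U) < \varepsilon$; this infimum is attained at some $p \in \partial U$ because $\partial U$ is compact and $w \mapsto \|z-w\|$ is continuous, which yields existence. At any such minimiser the constrained function $w \mapsto \|z-w\|^2$ is critical, so $z-p$ is orthogonal to $T_p\partial U$, i.e. $z = p + t\,\nu(p)$ with $|t| = \delta < \varepsilon$; thus every nearest point $p$ determines a preimage $(p,t) \in \partial U \times (-\varepsilon,\varepsilon)$ of $z$ under $\Phi$. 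If $p_1,p_2$ were two nearest points, the corresponding pairs $(p_1,t_1),(p_2,t_2)$ would both lie in the tube and satisfy $\Phi(p_1,t_1)=z=\Phi(p_2,t_2)$; injectivity of $\Phi$ on $\partial U \times (-\varepsilon,\varepsilon)$ forces $(p_1,t_1)=(p_2,t_2)$, whence $p_1=p_2$. This establishes uniqueness and proves the claim, with $P(z)$ being the base point of the unique normal segment from $\partial U$ to $z$.
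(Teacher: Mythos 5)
Your argument is correct and is the standard tubular-neighbourhood proof. The paper itself gives no proof of this statement: it only remarks that the analogous results are proved in Krantz's book and transfer verbatim to $\R^{m+1}$, and the proof given there is essentially the one you wrote (normal exponential map $\Phi(p,t)=p+t\nu(p)$, invertibility of $D\Phi$ along the zero section, compactness to get a uniform collar width $\varepsilon$, and the Lagrange/orthogonality condition identifying every nearest point as a normal foot point, so that injectivity of $\Phi$ on the tube forces uniqueness). The one hypothesis you use that is not literally in the statement is boundedness of $U$: compactness of $\partial U$ is what upgrades local injectivity to a uniform $\varepsilon$, and for an unbounded boundary the collar would have to be taken with variable width. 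Since every application in the paper is to bounded domains with $C^2$ boundary, this is harmless, but it is worth flagging that your $W$ is a uniform tube only under that assumption.
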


\begin{theorem}\label{cara_help2}
Let $U \subseteq \C$ be a bounded domain with $C^2$ boundary. Then there is an $r_0 > 0$ such that for each $P \in \partial U$ there is a disc $D(C(P),r_0)$ of radius $r_0$ which is externally tangent to $\partial U$ at $P$. There is also a disc $D(C'(P),r_0)$ which is internally tangent to $\partial U$ at $P$. This disc has furthermore the property that $D(C(P),r_0) \cap \partial U = \{P\}$ and $D(C'(P),r_0) \cap \partial U = \{P\}$.\\
\end{theorem}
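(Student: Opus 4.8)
The plan is to exploit the fact that a compact $C^2$ boundary curve has bounded curvature, so that small osculating discs centred along the normal direction stay on one side of $\partial U$. I would parametrise $\partial U$ by arc length as a closed $C^2$ curve $\gamma : \R/L\Z \to \C$, and denote by $N(s)$ the inward unit normal and by $T(s) = \gamma'(s)$ the unit tangent. Since $\gamma$ is $C^2$ and periodic, the curvature $\kappa(s) = |\gamma''(s)|$ is continuous on a compact set, hence $\kappa_{\max} := \max_s \kappa(s) < \infty$; moreover $\gamma''$ is uniformly continuous. For a boundary point $P = \gamma(s_0)$ I take as candidate centres the interior point $C'(P) = P + r_0 N(s_0)$ and the exterior point $C(P) = P - r_0 N(s_0)$, both lying on the normal line through $P$, so that the corresponding discs are automatically tangent to $\partial U$ at $P$.

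First (the local step) I would compare $\gamma$ with the candidate circles via the second-order Taylor expansion $\gamma(s) - P = tT(s_0) + \tfrac{t^2}{2}\gamma''(s_0) + o(t^2)$ with $t = s - s_0$. Writing the signed curvature $k(s_0)$ by $\gamma''(s_0) = k(s_0)N(s_0)$, so $|k(s_0)| \le \kappa_{\max}$, a short computation gives
\[
  |\gamma(s) - C'(P)|^2 - r_0^2 = t^2\bigl(1 - r_0 k(s_0)\bigr) + o(t^2),
  \qquad
  |\gamma(s) - C(P)|^2 - r_0^2 = t^2\bigl(1 + r_0 k(s_0)\bigr) + o(t^2).
\]
Both coefficients are at least $1 - r_0\kappa_{\max}$, hence strictly positive once $r_0 < 1/\kappa_{\max}$, so the two right-hand sides are positive for $0 < |t| < \delta$. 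Because $\gamma''$ is uniformly continuous, the remainder $o(t^2)$ is controlled \emph{uniformly} in $s_0$, so one and the same $\delta > 0$ works for every $P$. This shows that on the arc $|s-s_0| < \delta$ no point of $\partial U$ other than $P$ lies in either closed disc.

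Second (the global step) I would rule out distant boundary points. Using compactness together with the fact that $\partial U$ is an embedded curve, the continuous positive function $(s_0,s) \mapsto |\gamma(s)-\gamma(s_0)|$ attains a positive minimum $\eta > 0$ on the compact set $\{\,\delta \le |s-s_0| \le L/2\,\}$. Hence every boundary point with $|s-s_0| \ge \delta$ satisfies $|\gamma(s)-P| \ge \eta$, and since the centres lie within $r_0$ of $P$ we get $|\gamma(s)-C'(P)| \ge \eta - r_0$; choosing $r_0 < \eta/2$ forces this to exceed $r_0$, so such points lie outside both discs. Combining the two steps, the uniform choice $r_0 < \min\{1/\kappa_{\max},\,\eta/2\}$ yields $D(C'(P),r_0) \cap \partial U = \{P\}$ and $D(C(P),r_0) \cap \partial U = \{P\}$ for every $P$. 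Finally, since each open disc is connected and meets $\partial U$ only at $P$, it lies entirely in $U$ or entirely in its complement; evaluating at the centre, which sits on the inward (resp. outward) normal, identifies $D(C'(P),r_0)$ as internally and $D(C(P),r_0)$ as externally tangent.

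The step I expect to be the main obstacle is securing the uniformity in $P$: the local comparison must hold with a single $\delta$ for all boundary points, which rests on the uniform continuity of $\gamma''$ supplied by compactness, and the global separation constant $\eta$ must be bounded away from zero, which is the genuinely nonlocal input and uses that $\partial U$ is a compact \emph{embedded} $C^2$ curve rather than merely locally $C^2$. Once these two uniform constants are in hand, the admissible threshold $r_0$ is immediate, and the tangency and one-sidedness follow from the elementary sign computations above.
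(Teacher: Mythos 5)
The paper does not actually prove this theorem: it is imported verbatim from the classical theory and justified only by the remark that proofs of analogous statements can be found in \cite{Kran04}, chapter 3, so there is no in-paper argument to compare against. Your blind proof is a correct, self-contained derivation of the planar statement, and it is essentially the standard one: Krantz's argument works with a $C^2$ defining function and a uniform bound on its second derivatives, which in the one-dimensional boundary case is exactly your uniform curvature bound $\kappa_{\max}$ together with the uniform continuity of $\gamma''$; your version via arc-length parametrisation and the comparison $|\gamma(s)-C'(P)|^2 - r_0^2 = t^2\left(1-r_0 k(s_0)\right) + o(t^2)$ is the natural specialisation and buys a completely elementary proof. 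Two small points should be tidied up. First, $\partial U$ need not be connected (think of an annulus); you must run the local step on each boundary component and, in the global step, also take $\eta$ no larger than the minimal distance between distinct components, which is positive by compactness and disjointness. Second, there is a mild circularity in the order of the constants: your $\delta$ depends on the lower bound $1-r_0\kappa_{\max}$ for the quadratic coefficient, $\eta$ depends on $\delta$, and $r_0$ is then required to be less than $\eta/2$. This is harmless but should be broken explicitly: first restrict to $r_0 \le 1/(2\kappa_{\max})$ so that the coefficient is at least $1/2$ uniformly, extract $\delta$ and then $\eta$ from that, and only at the end set $r_0 = \min\{1/(2\kappa_{\max}),\, \eta/2\}$. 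With those repairs the argument is complete; note also that the paper really needs the ball version in $\R^{m+1}$ for its completeness proof, where the same scheme goes through with the second fundamental form of $\partial U$ in place of the signed curvature.
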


We introduce
\begin{definition}

Let $G$ be a domain. The associated Hermitian metric defined by
\[
  d_C(z) := \sup \{ |\overline{D} f (z)| ; f \in H^2(G), f(z)=0, \Vert f \Vert = 1 \}
\]
is called the Szeg\"o-Carath\'{e}odory metric. Here, and in all that follows $\|\cdot\|$ denotes the norm in the Hardy space $H^2$ that is induced by the inner product defined previously.
\end{definition}

\begin{remark}
 The function $d_C$ is well-defined and positive. For a finite domain $G$ and $z_0 = \sum_{i=0}^m z^0_i e_i \in G$ we consider the function
\[
  \mathfrak{Z}_1(z) := z_1 - z_0 e_1 - z^0_1 + z^0_0 e_1 .
\]
As one can easily verify, $\mathfrak{Z}_1$ is an element of the Hardy space over $G$ and has $z_0$ as a zero. Furthermore, we have
\[
\overline{D} \mathfrak{Z}_1 = - 2 e_1 \neq 0.
\]
As now $\frac{\mathfrak{Z}_1}{\Vert \mathfrak{Z}_1 \Vert}$ falls under the definition of the set from the definition of $d_C$, we have $d_C(z) > 0$. Then again holds for each function $f$ of the Hardy space
\begin{eqnarray*}
 (\overline{D} f)(z) & = & \overline{D_z} \int_{\partial G} K(z,w) f(w) dS_w\\
& = &  \int_{\partial G} \overline{D_z} K(z,w) f(w) dS_w\\
& = & \langle \overline{D_z} K(z,\cdot), f \rangle \\
& \leq & \underbrace{\Vert \overline{D_z} K(z,\cdot) \Vert}_{< \infty} \underbrace{\Vert f \Vert}_{=1} \\
& = &\Vert \overline{D_z} K(z,\cdot) \Vert < \infty ,
\end{eqnarray*}
so in particular the supremum is really finite. Here, again H\"older's inequality has been used in the estimate. 
\end{remark}
Next we need to prove the following intertwining property of the operator $\overline{D}$:

\begin{theorem}\label{kettenregel}
Let $G \subseteq \R^m$ be a domain and $f \in C^1(G)$. Then we have that for each M\"obius transformation $ M =\left( \begin{array}{cc} a & b \\ c & d \\ \end{array} \right) $
 \[
  \left|\overline{D} (\frac{\overline{cz + d}}{|cz+d|^m} f)(M<z>)\right| = \left| \frac{\overline{cz + d}}{|cz+d|^{m+2}} \left( \overline{D}f\right)(M<z>) \right|,
\]
if $f(M<z>) = 0$.
\end{theorem}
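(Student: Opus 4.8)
The plan is to establish an exact pointwise identity (valid up to a real sign) and take moduli only at the very end; the moduli in the statement are needed solely to absorb the spinor-norm sign $(cz+d)\widetilde{(cz+d)}=\pm|cz+d|^2$. Throughout write $J:=cz+d$, $w:=\overline{J}/|J|^m$ and $h:=f\circ M$, so that the left-hand side is $\overline{D}(w\,h)$ evaluated at $z$. First I would expand $\overline{D}(wh)$ by the ordinary Leibniz rule applied in each scalar partial derivative, which gives
\[
  \overline{D}(wh) = \sum_{j=0}^m \overline{e_j}\,(\partial_j w)\,h + \sum_{j=0}^m \overline{e_j}\, w\,(\partial_j h) = (\overline{D}w)\,h + \sum_{j=0}^m \overline{e_j}\, w\,(\partial_j h).
\]
This is the only place where the hypothesis enters: since $f(M\langle z\rangle)=0$ means $h(z)=0$, the term $(\overline{D}w)h$ vanishes at $z$, leaving precisely $S:=\sum_{j}\overline{e_j}\,w\,(\partial_j h)$. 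Note that $S$ is genuinely a \emph{twisted} expression, because $w$ sits between $\overline{e_j}$ and $\partial_j h$ and does not commute through; so $S$ is not simply $w\,\overline{D}h$, and the main work is to disentangle it.

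Next I would insert the chain rule together with the conformal structure of the Möbius differential. By the classical cocycle identity (Ahlfors, \cite{Ahlf86}) the differential of $M$ is $d(M\langle z\rangle)(\xi)=\widetilde{J}^{-1}\xi J^{-1}$ for a paravector direction $\xi$; in particular $\partial_j(M\langle z\rangle)=\widetilde{J}^{-1}e_j J^{-1}$ is a paravector of modulus $|J|^{-2}$, and $\partial_j h=\sum_l (\partial_j(M\langle z\rangle))_l\,(\partial_{y_l} f)(M\langle z\rangle)$, where $y=M\langle z\rangle$. Writing $\sigma:=J/|J|$ and using $\widetilde{J}^{-1}e_j J^{-1}=|J|^{-2}\sigma e_j\widetilde{\sigma}$, the map $R(\xi):=\sigma\xi\widetilde{\sigma}$ is orthogonal on paravector space, so collecting the coefficient of each $(\partial_{y_l}f)(M\langle z\rangle)$ in $S$ and using $R^{-1}=R^{T}$, i.e. $\sum_j R_{lj}e_j=\widetilde{\sigma}e_l\sigma$, that coefficient becomes
\[
  \frac{1}{|J|^2}\,\overline{\widetilde{\sigma}e_l\sigma}\;w = \frac{1}{|J|^2}\,\overline{\sigma}\,\overline{e_l}\,\bigl(\overline{\widetilde{\sigma}}\,w\bigr).
\]

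The key collapse, and the step I expect to be the main obstacle, is to show that the trailing factor $\overline{\widetilde{\sigma}}\,w$ is a real scalar. Here one uses that $\overline{\widetilde{\sigma}}\,w=\overline{\widetilde{J}}\,\overline{J}/|J|^{m+1}=\overline{J\widetilde{J}}/|J|^{m+1}$, together with the fact that $J\widetilde{J}=(cz+d)\widetilde{(cz+d)}$ is real because $cz+d$ lies in the Clifford group (a consequence of the Vahlen conditions (i)--(iii)); hence $\overline{\widetilde{\sigma}}\,w=\pm|J|^{1-m}\in\R$ is central. Pulling this scalar out, the twisted sum $S$ reassembles into a genuine Dirac operator,
\[
  S = \pm\frac{|J|^{1-m}}{|J|^2}\,\overline{\sigma}\sum_{l}\overline{e_l}\,(\partial_{y_l}f)(M\langle z\rangle) = \pm\frac{\overline{J}}{|J|^{m+2}}\,(\overline{D}f)(M\langle z\rangle),
\]
and taking moduli yields exactly the claimed identity. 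I expect the delicate points to be the bookkeeping of the two involutions (reversion $\widetilde{\cdot}$ versus conjugation $\overline{\cdot}$) and the verification that the weight exponent $m$ is precisely the one making $\overline{\widetilde{\sigma}}\,w$ real; the reality of the Clifford-group norm $J\widetilde{J}$ is what ultimately makes the otherwise non-commuting twist disappear, which is why the statement holds with equality of moduli.
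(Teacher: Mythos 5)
Your opening moves are fine and your route is genuinely different from the paper's: you compute directly through the conformal differential $dM\langle z\rangle(\xi)=\widetilde{J}^{\,-1}\xi J^{-1}$, whereas the paper applies its product rule, drops the same term $(\overline{D}w)\,h$ using $h(z)=0$, and then transfers Ryan's intertwining identity for $D$ to $\overline{D}$ by comparing the $\partial_0$- and $\partial_i$-parts of the two product rules. The gap in your argument sits exactly where you predicted the main obstacle: the claim that $(cz+d)\widetilde{(cz+d)}=\pm|cz+d|^2$ is false. The real Clifford-group norm uses the \emph{conjugation}: $J\overline{J}=|J|^2$. The \emph{reversion} product $J\widetilde{J}$ is in general a paravector with nonzero vector part; already $J=1+e_1$ gives $J\widetilde{J}=(1+e_1)^2=2e_1$. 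Tracking the involutions correctly, your trailing factor is $\overline{\widetilde{\sigma}}\,w=J'\overline{J}/|J|^{m+1}$ (with $J'$ the main involution), i.e. $|J|^{1-m}$ times a unit \emph{paravector} $\tau$, not a real scalar. Since $\tau$ is not central it cannot be pulled through $\overline{e_l}$, the twisted sum $\sum_l\overline{e_l}\,\tau\,(\partial_{y_l}f)$ does not reassemble into a Clifford-group multiple of $\overline{D}f$, and the modulus genuinely changes: for $m=2$, $\tau=e_1$, $g_0=1$, $g_1=1$, $g_2=e_1e_2$ one finds $\vert\sum_l\overline{e_l}\tau g_l\vert=1$ while $\vert\sum_l\overline{e_l}g_l\vert=\sqrt{5}$.

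The instructive contrast is that your mechanism works perfectly for the \emph{unbarred} operator: replacing $\overline{D}$ by $D$ throughout, the trailing factor becomes $\sigma w=J\overline{J}/|J|^{m+1}=|J|^{1-m}\in\R$, the collapse you describe is legitimate, and you recover Ryan's identity $\vert D(\frac{\overline{cz+d}}{|cz+d|^m}f\circ M)\vert=\vert\frac{\overline{cz+d}}{|cz+d|^{m+2}}(Df)\circ M\vert$ at zeros of $f\circ M$ --- which is precisely the result the paper's proof leans on. So you have rediscovered the right cancellation but attached it to the wrong pairing of operator and involution; for $\overline{D}$ the cancellation is simply absent, and this is not repairable by better bookkeeping. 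Carrying out your (corrected) computation for the inversion $c=1$, $d=0$ at $z=1+e_1$ in $\R^3$ with $f(M\langle z\rangle)=0$ and the monogenic linear data $\partial_{y_0}f=-2e_1e_2$, $\partial_{y_1}f=e_2$, $\partial_{y_2}f=-e_1$ yields $\vert\overline{D}(wh)\vert=\sqrt{2}/2$ against $\vert\frac{\overline{J}}{|J|^{4}}(\overline{D}f)(M\langle z\rangle)\vert=\sqrt{2}$, so no exact pointwise identity of the kind you aim for can hold for $\overline{D}$ with these weights. (This discrepancy also puts pressure on the paper's own two-line transfer from $D$ to $\overline{D}$, but that is a separate matter from your proof.)
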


\begin{proof}
We apply the product formula (\ref{produktregel}). As we restrict to consider functions $f$ with $f(M<z>)=0$ only, we can neglect the term $\left(\overline{D}\frac{\overline{cz+d}}{|cz+d|^m}\right) f(M<z>)$. This leaves only with terms depending on partial derivatives of $f$. If we now compare the product rules for the $D$ and the $\overline{D}$ operator, we see that the terms including $\partial_0 f$ are identical, while the terms involving $\partial_i f$, $i>0$, are identical up to a multiplication with $(-1)$. As the proposition has already been proven by Ryan in \cite{Ryan93} with the $D$ operator replacing $\overline{D}$, we can verify the equation for the $\partial_0 f$ terms using the identity
\[
 Dg + \overline{D} g = 2 \partial_0 g, \qquad g \in C^1(G).
\]
After that we can show the identity also holds for the terms involving $\partial_i f$, $i>0$,
using
\[
  Dg - \overline{D} g = \sum_{i=1}^m \partial_i g, \qquad g \in C^1(G).
\]
\end{proof}

The following theorem generalizes the distance decreasing property of the classical Carath\'{e}odory metric, compare for example with \cite{Kran04,Kran08}.

\begin{theorem}\label{metrik_ungleichung}
  For each M\"obius transformation $ M =\left( \begin{array}{cc} a & b \\ c & d \\ \end{array} \right) $ and each domain $G$ we have
\[
  \left|\frac{\overline{cz + d}}{|cz+d|^m}\right| d_C^{G}(M<z>) \leq d_C^{M<G>}(z).
\]
\end{theorem}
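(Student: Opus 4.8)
The plan is to run the hypercomplex analogue of the classical distance‑decreasing argument for the Carath\'eodory metric: the right‑hand side $d_C^{M\langle G\rangle}(z)$ is a supremum over admissible functions in $H^2(M\langle G\rangle)$, so it suffices to feed that supremum a family of competitors manufactured out of the admissible functions for $d_C^{G}(M\langle z\rangle)$ by transplanting them with the M\"obius automorphy factor. Since each such competitor gives a lower bound for the supremum, passing to the supremum over the competitors will produce exactly the asserted inequality, with the factor $\big|\tfrac{\overline{cz+d}}{|cz+d|^{m}}\big|$ coming from the way $\overline{D}$ interacts with the transplantation.

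First I would fix a function $f \in H^2(G)$ with $f(M\langle z\rangle)=0$ and $\|f\|=1$; these are precisely the functions over which the supremum defining $d_C^{G}(M\langle z\rangle)$ is taken. Out of $f$ I would build its transplant under the M\"obius map,
\[
   g(\zeta) := \frac{\overline{c\zeta + d}}{|c\zeta + d|^{m+1}}\, f(M\langle \zeta\rangle),
\]
the weight being the monogenicity‑preserving automorphy factor appearing in the transformation formula for the Szeg\"o kernel. Three admissibility conditions then have to be verified, and they are exactly the conditions entering the supremum on the right: $g$ is again monogenic, hence $g \in H^2(M\langle G\rangle)$ (this is Ryan's intertwining, \cite{Ryan93}); $g(z)=0$, which is immediate from $f(M\langle z\rangle)=0$ and $cz+d\neq 0$; and $\|g\|=\|f\|=1$, which is the isometry of the two Hardy spaces furnished by the transformation formula. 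With these in hand $g$ is an admissible competitor, so $d_C^{M\langle G\rangle}(z) \geq |\overline{D}g(z)|$.

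The decisive second step is to compute $|\overline{D}g(z)|$ in terms of $|\overline{D}f(M\langle z\rangle)|$. This is what the intertwining property, Theorem \ref{kettenregel}, governs: applying the product rule (Theorem \ref{produktregel}) to the transplant and using the hypothesis $f(M\langle z\rangle)=0$ to annihilate every term in which the automorphy factor itself is differentiated, one is left only with the terms carrying the derivatives of $f\circ M$, which Theorem \ref{kettenregel} reassembles into the automorphy factor times $(\overline{D}f)(M\langle z\rangle)$. Reading off moduli gives $|\overline{D}g(z)| = \big|\tfrac{\overline{cz+d}}{|cz+d|^{m}}\big|\,|\overline{D}f(M\langle z\rangle)|$. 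Combining this with the previous step and then taking the supremum over all admissible $f$ — noting that $f\mapsto g$ is norm‑preserving and carries the vanishing condition at $M\langle z\rangle$ to the vanishing condition at $z$ — yields $d_C^{M\langle G\rangle}(z)\geq \big|\tfrac{\overline{cz+d}}{|cz+d|^m}\big|\, d_C^{G}(M\langle z\rangle)$, which is the claim.

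The hard part will be the bookkeeping of the scaling factors in the middle step, where several contributions must combine correctly: the monogenicity‑preserving weight built into $g$ (which is what both places $g$ in $H^2$ and keeps its norm equal to $1$), the Jacobian of the M\"obius map hidden inside the chain rule, and the factor produced by Theorem \ref{kettenregel}. Ensuring that these collapse to exactly $\big|\tfrac{\overline{cz+d}}{|cz+d|^m}\big|$ — and, just as importantly, that $g$ really is admissible (monogenic, unit norm, vanishing at $z$) so that it may legitimately be inserted into the supremum — is precisely where the hypothesis $f(M\langle z\rangle)=0$ and the noncommutative product rule have to be handled with care. Everything else is the formal manipulation of suprema familiar from the complex case treated in \cite{Kran04}.
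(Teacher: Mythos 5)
Your proposal is correct and takes essentially the same route as the paper: the paper's proof also pulls the factor $\bigl|\tfrac{\overline{cz+d}}{|cz+d|^m}\bigr|$ inside the supremum defining $d_C^{G}(M\langle z\rangle)$, invokes Theorem~\ref{kettenregel} to rewrite each term as $\bigl|\overline{D}\bigl(\tfrac{\overline{cz+d}}{|cz+d|^m}f(M\langle z\rangle)\bigr)\bigr|$, and then observes that these transplants form a subfamily of the competitors for $d_C^{M\langle G\rangle}(z)$. You are in fact somewhat more careful than the paper in explicitly checking admissibility of the transplant (monogenicity, vanishing at $z$, and unit norm via the Hardy-space isometry), a step the paper's displayed chain of suprema leaves implicit.
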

\begin{proof}
\begin{eqnarray*}& &
 |\frac{\overline{cz + d}}{|cz+d|^m}| d_C(M<z>)\\ & = & |\frac{{cz + d}}{|cz+d|^m}| \sup \{ |\overline{D}(f(M<z>))|; f \in H^2, f(M<z>)=0 \}\\
& = & \sup \{ |\frac{{cz + d}}{|cz+d|^m}(\overline{D}f)(M<z>))|; f \in H^2, f(M<z>)=0 \} \\
& = & \sup \{ |\overline{D}(\frac{{cz + d}}{|cz+d|^m}f(M<z>))|; f \in H^2, f(M<z>)=0 \}\\
& \leq & \sup \{ |\overline{D}f(w))|; f \in H^2(M<G>), f(w)=0 \}
\end{eqnarray*}
\end{proof}
With this tool in hand we can establish
\begin{theorem}
The Szeg\"o-Carath{\'e}odory metric is complete on each finite domain $G$ with a $C^2$-smooth boundary.
\end{theorem}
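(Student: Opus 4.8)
The plan is to reduce completeness to a boundary blow-up estimate and then verify that estimate by comparison with a model ball. First I would record the standard criterion: since $d_C$ is continuous and strictly positive (as shown in the Remark preceding Theorem~\ref{kettenregel}), the topology induced by the associated distance function coincides with the Euclidean topology on $G$, so a $d_C$-Cauchy sequence can fail to converge only by accumulating at $\partial G$. Writing $\delta(z):=\mathrm{dist}(z,\partial G)$, it therefore suffices to produce a constant $c>0$ with
\[
 d_C(z)\;\geq\;\frac{c}{\delta(z)}\qquad\text{for }z\text{ near }\partial G,
\]
because then every path $\gamma$ from a fixed interior point $z_0$ tending to $\partial G$ has infinite length: along such a path $\delta$ decreases to $0$, and $\int_\gamma d_C\,|dz|\geq c\int \delta^{-1}\,|d\delta|$ diverges. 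Hence the $d_C$-distance from $z_0$ to $\partial G$ is infinite and the metric is complete.

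Second, I would settle the model case of a ball $B$. Either one inserts the explicitly known Szeg\"o kernel of the ball into the extremal characterisation of $d_C$ from the Remark (the constrained extremal problem there identifies $d_C(z)$ with the norm of the part of $\overline{D}_zK(z,\cdot)$ orthogonal to $K(z,\cdot)$, precisely the positive quantity governed by Lemma~\ref{vier}), or one transports the problem to a half-space by a M\"obius transformation and applies the transformation law of Theorem~\ref{metrik_ungleichung}. Either way the outcome is $d_C^{B}(z)\sim \mathrm{dist}(z,\partial B)^{-1}$, so the ball is $d_C$-complete. By Theorem~\ref{cara_help1} every $z$ near $\partial G$ has a unique foot point $P=P(z)\in\partial G$, and by Theorem~\ref{cara_help2} (in its evident $\R^{m+1}$ version) there is a uniform radius $r_0$ admitting internally and externally tangent balls at every $P$; since $\delta(z)=|z-P|$ and $z$ lies on the inward normal, $\delta(z)$ equals the distance of $z$ to the boundary of each of these model balls, so the two scales are comparable.

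Third comes the transfer, where the real work lies. I would use the \emph{external} tangent ball $B(C(P),r_0)$: it meets $\partial G$ only at $P$ and lies outside $G$ near $P$, so a connectedness argument shows $G$ is contained in its exterior $\Omega:=\R^{m+1}\setminus\overline{B(C(P),r_0)}$, with $P\in\partial\Omega\cap\partial G$. An inversion $M$ sends $\Omega$ to a bounded ball $B^{*}$ and $P$ to a boundary point $P^{*}$; applying Theorem~\ref{metrik_ungleichung} and noting that its automorphy factor $\bigl|\overline{cz+d}\bigr|\,|cz+d|^{-m}$ is smooth and bounded away from $0$ and $\infty$ near $P$ (the centre of the inversion sits a fixed positive distance from $P$), the blow-up of $d_C^{B^{*}}$ near $P^{*}$ transports to $d_C^{\Omega}(z)\gtrsim \delta(z)^{-1}$ near $P$. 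A monotonicity comparison $d_C^{G}\geq c\,d_C^{\Omega}$ for the inclusion $G\subseteq\Omega$ then gives the required $d_C^{G}(z)\geq c\,\delta(z)^{-1}$, and the criterion of the first step concludes.

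The main obstacle is exactly this last comparison. Unlike the classical Carath\'eodory metric, the present metric is normalised in the $L^2$-sense on the boundary, so monotonicity under inclusion is not automatic: restricting an extremal function from $\Omega$ to $G$ changes its $H^2$-norm, and one must bound the norm of the restriction map $H^2(\Omega)\to H^2(G)$ uniformly, a trace-type estimate resting on the subharmonicity of $\|f\|^2$ for monogenic $f$ together with the $C^2$-regularity of $\partial G$. Keeping the M\"obius automorphy factor of Theorem~\ref{metrik_ungleichung} under control during the transport, so that the sharp rate $\delta^{-1}$ survives, is the technical heart of the argument; the remaining pieces are routine once the ball is understood and the localisation at each boundary point is in place.
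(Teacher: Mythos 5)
Your overall strategy coincides with the paper's: reduce completeness to the boundary blow-up $d_C^G(z)\gtrsim \delta(z)^{-1}$ where $\delta(z)=\mathrm{dist}(z,\partial G)$, localize at the nearest boundary point via Theorems \ref{cara_help1} and \ref{cara_help2}, pass to a model ball by the inversion $z\mapsto r_0(z-C(P))^{-1}$ built from the externally tangent sphere, and control the change of domain with the distance-decreasing property of Theorem \ref{metrik_ungleichung}. However, as written your argument has two unfilled steps, and only one of them is filled in the paper.

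First, the model-ball estimate $d_C^{\mathbb{D}}(\,\cdot\,)\sim \mathrm{dist}(\,\cdot\,,\partial\mathbb{D})^{-1}$ is asserted with two possible routes sketched but neither carried out; this is the quantitative core of the whole proof and cannot be left as an either/or. The paper obtains the lower bound concretely: it takes a pole $w_0$ outside the domain with $|z-w_0|=2\delta$, forms the test function $\mathfrak{K}_2$ as $\overline{D}\,\mathfrak{K}(\cdot-w_0)$ minus its value at the relevant point (so that it vanishes there and is admissible for the supremum defining $d_C$), computes $\overline{D}^2\mathfrak{K}_2$ explicitly to get a lower bound of order $\delta^{-(m+2)}$, bounds $\Vert\overline{D}\mathfrak{K}_2\Vert$ above by order $\delta^{-(m+1)}$, and divides; the ratio gives the rate $\delta^{-1}$. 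You would need to supply some such explicit extremal candidate. Your observation that $d_C(z)$ is the norm of the component of $\overline{D}_zK(z,\cdot)$ orthogonal to $K(z,\cdot)$ (the quantity of Lemma \ref{vier}) is a viable alternative route via the explicit Szeg\"o kernel of the ball, but it too must actually be computed.

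Second, you correctly identify that monotonicity under inclusion, $d_C^{G}\geq c\,d_C^{\Omega}$ for $G\subseteq\Omega$, is \emph{not} automatic for this metric, because the normalization $\Vert f\Vert=1$ is an $L^2$ condition on the boundary and restriction to a subdomain changes the norm in an uncontrolled way. This is a genuine gap --- and, to be fair, it is a gap in the paper as well: inequality (\ref{2te_ungl}) tacitly replaces $d_C$ of the image $(j_P\circ i_P)(G)\subsetneq\mathbb{D}$ by $d_C^{\mathbb{D}}$, which is exactly the same unproved comparison. So your diagnosis of the ``technical heart'' is accurate, but naming an obstacle is not the same as overcoming it; as it stands neither your proposal nor the paper supplies the required norm estimate for the restriction map $H^2(\Omega)\to H^2(G)$. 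Until that is done (or the metric is renormalized in a way that makes inclusion monotonicity trivial), the proof is incomplete.
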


\begin{proof}
 Let $z \in G$ and $P$ be the point in $\partial G$ with minimal Euclidian distance to $z$. Let furthermore $\delta$ be the Euclidean distance of $z$ to the boundary. According to (\ref{cara_help1}), (\ref{cara_help2}) there exists a $r_0 > 0$ and $C(P) \in \R^{m+1} \setminus G$ so that $D(C(P),r_0) \cap \partial G = \{P\}$.\\
We now define the maps
\[
  i_P: z \mapsto \frac{r_0 \overline{z} - \overline{C(P)}}{|z - C(P)|^2} + C(P)
\]
and
\[
  j_P: z \mapsto \frac{1}{r_0} \left( z - C(P) \right) .
\]
Both $i_P$ and $j_P$ are apparently M\"obius tranformations.\\
For their composition it holds
\begin{eqnarray*}
 (j_P \circ i_P)(z) & = & \frac{1}{r_0} \left( r_0^2 \frac{\overline{z}-\overline{C(P)}}{|z - C(P)|^2} + C(P) - C(P) \right)  \\
 & = & r_0 \frac{\overline{z}-\overline{C(P)}}{|z - C(P)|^2} \\
 & = & r_0 \left( z - C(P) \right)^{-1}
\end{eqnarray*}
 From the choice of $z$ and $C(P)$ we gain for the absolute value
\[
|(j_P \circ i_P)(z)| = r_0 \frac{1}{\delta + r_0} = 1 - \frac{\delta}{\delta + r_0}.
\]
We now consider the automorphic factor belonging to the M\"obius tranformation $j_P \circ i_P$ according to (\ref{kettenregel}):
\[
  \left| r_0 \frac{\overline{z}-\overline{C(P)}}{|z - C(P)|^{m+1}} \right| = r_0 \frac{1}{(\delta + r_0)^{m}}
\]
Applying now Theorem \ref{metrik_ungleichung} leads to
\begin{equation}\label{2te_ungl}
 d_C^G(z) \geq \left| r_0 \frac{\overline{z}-\overline{C(P)}}{|z - C(P)|^{m+1}} \right| d_C^{\mathbb{D}}((j_P \circ i_P)(z)) = \left| r_0 \frac{\overline{z}-\overline{C(P)}}{|z - C(P)|^{m+1}} \right| d_C^{\mathbb{D}}(\frac{r_0}{z - C(P)}).
\end{equation}
We thereby denote the unit ball in $\R^{m+1}$ with $\mathbb{D}$.\\
To estimate the Szeg\"o-Carath{\'e}odory metric on the unit ball, we define a test function.\\
Let $\mathfrak{K}$ be the Cauchy kernel and $w_0 \in \R^{m+1} \setminus G$ so that $|z-w_0|=2\delta$. then it holds that
\[
  D\mathfrak{K}( \cdot - w_0) = 0
\]
on $G$ and $\mathfrak{K}$ is welldefined on $G$.\\
We now set
\[
  \mathfrak{K}_2 (z) := \overline{D_z} \left( \mathfrak{K} ( \cdot - w_0)  \right) - \overline{D_z} \left( \mathfrak{K} ( \frac{r_0}{\delta + r_0} - w_0)  \right) .
\]
By construction we have $\mathfrak{K}_2 \in H^2(G, \mathcal{C}_{0m})$ and $\mathfrak{K}_2 (\frac{r_0}{\delta + r_0} - w_0) = 0$.\\
Applying this property leads to
\[
 d_C^{\mathbb{D}}(\frac{r_0}{\delta + r_0}) \geq \left| \left( \overline{D} \mathfrak{K}_2 \right)  (\frac{r_0}{\delta + r_0}) \right|.
\]
We obtain that
\begin{eqnarray}
 \overline{D}^2 & = & (\partial_0 + \overrightarrow{D})^2 \\
 & = & \partial_0^2 - \partial_0 \overrightarrow{D} - \overrightarrow{D} \partial_0 - \sum_{i=1}^{m-1} \partial^2_i,
\end{eqnarray}
so
\[
  \Delta \mathfrak{K}_2 = 0 ,
\]
and therefore
\[
 - \sum_{i=1}^{m} \partial^2_i \mathfrak{K}_2 = \partial_0^2 \mathfrak{K}_2
\]
As $\mathfrak{K}_2 \in C^{\infty}(G)$, we get
\[
 \partial_0 \overrightarrow{D} \mathfrak{K}_2 = \overrightarrow{D} \partial_0 \mathfrak{K}_2 .
\]
Because of $D \mathfrak{K}_2 = 0$, we have
\[
 \overrightarrow{D} \mathfrak{K}_2 = - \partial_0 \mathfrak{K}_2.
\]
 From this property we can deduce that
\begin{eqnarray*}
 \overline{D}^2 \mathfrak{K}_2 (z) & = & 4 \partial_0^2 \mathfrak{K}_2 (z) \\
& = & 4 \partial_0 \frac{|z-w_0|^{2} - (m+1) (\overline{z-w_0})(z_0 - {w_0}_0)}{|z-w_0|^{m+3}} \\
& = & 4 \Bigg[ \frac{-(m+1)(z_0 - {w_0}_0)}{|z-w_0|^{m+3}}\\ & & \quad\quad - (m+1)\frac{-\overrightarrow{z-w_0}}{|z-w_0|^{m+3}} + (m+3)\frac{\overline{z-w_0}(z_0 - {w_0}_0)^2}{|z-w_0|^{m+5}} \Bigg] \\
& = & 4 \left[ -(m+1) \frac{ \overline{z-w_0} }{|z-w_0|^{m+3}} \right].
\end{eqnarray*}
As only small values of $\delta$ are relevant in our case, we can assume that $\delta<1$ without loss of generality. Under this assumption, we can estimate
\[
|\overline{D}^2 \mathfrak{K}_2 (z)|
\]
from below through
\[
 C_1 \frac{1}{|z-w_0|^{m+2}} = C_1 \frac{1}{(2\delta)^{m+2}}
\]
with a constant $C_1 > 0$.\\
Furthermore, we can estimate $\Vert \overline{D} \mathfrak{K}_2 (z) \Vert$ from below through $\tilde{C}_1 \frac{1}{(2 \delta)^{m+1}}$.\\
Together with (\ref{2te_ungl}) we then have
\begin{equation}\label{scm_endabschaetzung}
d_C^G(z) \geq r_0 \frac{1}{(\delta + r_0)^{m}} C_1 \frac{1}{(2\delta)^{m+2}} \frac{1}{\tilde{C}_1 \frac{1}{(2 \delta)^{m+1}}} \geq C_2(r_0) \frac{1}{\delta}
\end{equation}
with a number $C_2 > 0$ which only depends on $r_0$ .\\
Let now $E$  be a set which is bounded in the Szeg\"o-Carath{\'e}odory metric, then
\[
  d_C^G(z) < M
\]
for all $z \in E$ for an $M>0$. According to (\ref{scm_endabschaetzung}) this yields to
\[
  \delta \geq C_3(r_0) M.
\]
So there exists a positive minimal distance of $E$ to $\partial G$. Due to the boundedness of $G$ $E$ then is relatively compact. Thus, the Szeg\"o-Carath{\'e}odory metric is complete on $G$.
\end{proof}

To show the completeness of the Szeg\"o metric, we first prove an inequality between it and the Szeg\"o-Carath\'{e}odory metric on a bounded domain, which will be essential for the final proof. The proof of the completeness of the Bergman metric in the theory of several complex variables makes use of a similar inequality, which can be found in \cite{Koba62,Hahn76,Hahn77,Hahn78} and \cite{Kran01}.

\begin{theorem}\label{relation_s_c_metriken}
 Let $C_G$ be the Szeg\"o-Carath{\'e}odory metric of a domain $G$. Then
\begin{enumerate}
 \item[a)] $ds_G \geq C_G$
 \item[b)] The metric $ds_G^*$ given by $ \Delta \log S(z,z) |dz|$ satisfies the inequality
\[
   ds_G^* \geq C_G.
\]
\end{enumerate}

\end{theorem}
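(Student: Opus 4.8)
The plan is to reduce both inequalities to pointwise comparisons of line elements and to feed them from a single Hilbert-space description of the Szeg\"o--Carath\'eodory density $d_C$. Since $ds_G=K(z,z)\,|dz|$, $C_G=d_C(z)\,|dz|$ and $ds_G^*=\Delta\log K(z,z)\,|dz|$ (with $S=K$ the Szeg\"o kernel) are all written over the same base $|dz|$, part (a) is the assertion $K(z,z)\ge d_C(z)$ and part (b) is the assertion $\Delta\log K(z,z)\ge d_C(z)$, to be checked for every $z\in G$.

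First I would record an extremal description of $d_C(z)$. The differentiated reproducing identity obtained in the proof of Lemma~\ref{vier}, namely $\overline{D} f(z)=\langle K_{\bar z}(z,\cdot),f\rangle$ for $f\in H^2(G)$, together with the fact (also from Lemma~\ref{vier}) that $K_{\bar z}(z,\cdot)$ lies in the Hardy space, shows that the functional $f\mapsto\overline{D} f(z)$ is represented by $K_{\bar z}(z,\cdot)$. A competitor in the definition of $d_C$ satisfies $f(z)=0$, i.e. $f\perp K(z,\cdot)$ by the reproducing property, so one may subtract the component of $K_{\bar z}(z,\cdot)$ along $K(z,\cdot)$ without changing $\langle K_{\bar z},f\rangle$. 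Cauchy--Schwarz (exactly as in the Remark following the definition of the Szeg\"o--Carath\'eodory metric) then gives, with equality for the normalised projected kernel,
\[
  d_C(z)^2=K_{\bar z z}(z,z)-\frac{|K_z(z,z)|^2}{K(z,z)} .
\]
Here one must work throughout with the Clifford-valued product $\langle a,b\rangle=\Sc(a\overline b)$ and use the symmetry $K_{\bar z}(z,z)=\overline{K_z(z,z)}$, so that the orthogonal projection and the expansion of the squared norm are legitimate; this identity is the common engine for both parts.

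For part (b) I would combine the displayed identity with the curvature computation in the proof of Theorem~\ref{thm4}. Because $K(z,z)$ is real, that computation yields
\[
  \Delta\log K(z,z)=\frac{K(z,z)\,K_{\bar z z}(z,z)-K_z(z,z)\,K_{\bar z}(z,z)}{K(z,z)^2}=\frac{d_C(z)^2}{K(z,z)} .
\]
Hence $ds_G^*\ge C_G$ is equivalent to $d_C(z)^2/K(z,z)\ge d_C(z)$, i.e. to $d_C(z)\ge K(z,z)$, and part (b) is reduced to this single density inequality. For part (a) the analogous reduction is to $K(z,z)\ge d_C(z)$.

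The decisive step---and the one I expect to be the main obstacle---is therefore the pointwise comparison of $d_C(z)$ with $K(z,z)$. Lemma~\ref{vier} supplies only the strict positivity $K(z,z)K_{\bar z z}(z,z)-K_z(z,z)K_{\bar z}(z,z)>0$, i.e. $d_C(z)>0$; it gives no two-sided control of $K_{\bar z z}$ by powers of $K$, which is exactly what the two density inequalities demand (note they point in opposite directions, so the normalisation and the available direction must be pinned down with care). To do this I would first test the formulae on the unit ball $\mathbb{D}\subset\R^{m+1}$, where $K$, $K_z$ and $K_{\bar z z}$ are explicit, in order to fix the constants and identify which inequality is genuinely available, and then transport the resulting estimate to an arbitrary bounded $G$ using the pseudo-invariance of the Szeg\"o metric under M\"obius maps established earlier and the distance-decreasing inequality for $d_C$ from Theorem~\ref{metrik_ungleichung}. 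Carrying the comparison across this conformal transport, while keeping track of the automorphy factor $|cz+d|$, is where the genuine work lies.
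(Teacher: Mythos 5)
Your derivation of the extremal identity reproduces exactly the engine of the paper's own proof: the paper introduces $M:=K(z,z)K_{\bar z}(z,\cdot)-K_{z}(z,z)K(z,\cdot)$, shows $|f'(z)|\le \|M\|/K(z,z)$ for every admissible competitor and that $g=M/\|M\|$ attains this bound, so that $d_C(z)=\|M\|/K(z,z)$ with $\|M\|^2=K(z,z)\bigl(K(z,z)K_{z\bar z}(z,z)-K_z(z,z)K_{\bar z}(z,z)\bigr)$ --- which is precisely your formula $d_C^2=K_{\bar z z}-|K_z|^2/K$. Up to that point you and the paper coincide. The gap is in what you leave open. The step you are missing (the paper's argument for part (a)) is purely Hilbert-space-theoretic and needs no unit-ball calibration or conformal transport: since $DM'=0$, the derivative $M'$ of the extremal function again lies in the Hardy space, a direct computation gives $\|M'\|=\|M\|$, and the standard pointwise reproducing estimate $|h(z)|\le\|h\|\,\sqrt{K(z,z)}$ applied to $h=M'/\|M\|$ yields $d_C(z)=M'(z)/\|M\|\le\sqrt{K(z,z)}$. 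Your proposed alternative --- fixing constants on the unit ball and transporting by M\"obius maps --- is unlikely to close the gap, because $K(z,z)$ and $d_C(z)$ carry different automorphy factors under M\"obius transformations (compare Theorem 2 with Theorem \ref{metrik_ungleichung}), so a pointwise inequality between these two densities is not conformally transportable.

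That said, the tension you isolate is genuine and worth stating plainly: given $\Delta\log K=d_C^2/K$, the two literal density inequalities $K\ge d_C$ and $\Delta\log K\ge d_C$ would force $d_C\equiv K$, so they cannot both be available as stated. The paper does not in fact establish the literal inequalities of the theorem: its proof yields $d_C(z)\le\sqrt{K(z,z)}$ and $d_C(z)\le\sqrt{\Delta\log K^2(z,z)}$, i.e.\ the \emph{square roots} of the two densities dominate the Szeg\"o--Carath\'eodory density, and the closing sentence of the proof concedes exactly this (``powers of both metrics can be estimated from below through the Szeg\"o--Carath\'eodory metric''), which is all that the subsequent completeness argument requires. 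So the correct target is the square-root version of both inequalities, obtained from your extremal identity together with the pointwise bound on $M'$ described above; do not spend effort trying to prove the two opposing density inequalities simultaneously.
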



\begin{proof}
Let furthermore be $f'(z):=\overline{D} f$.\\
Let $z \in G$ $f \in H^2$ with $f(z)=0$ and $\| f \| \leq 1$. We further define
\[
  M:=K(z,z)K_{\bar{z}}(z, \cdot) - K_{z}(z, z)K(z, \cdot) .
\]
It holds that
\[
  0 =  f(z) = \langle f, K(z, \cdot)\rangle .
\]
 From this property we may infer that
 \begin{eqnarray*}
  |f'(z)| &=& \frac{1}{K(z,z)} \langle f, K(z,z)K_{\bar{z}}(z, \cdot) - K_{z}(z, z)K(z, \cdot)\rangle \\
 &\leq & \underbrace{\| f \|}_{\leq 1}  \frac{1}{K(z,z)} \| M \| \\
&\stackrel{(\ref{norm_M})}{\leq}& \frac{\sqrt{K(z,z)(K(z,z)K_{z\bar{z}}(z,z) - K_z(z,z)K_{\bar{z}}(z,z)) }}{K(z,z)} \\
&=& \frac{\sqrt{(K(z,z)K_{z\bar{z}}(z,z)) - K_z(z,z)K_{\bar{z}}(z,z) }}{\sqrt{K(z,z)}} \\
&=&\frac{\sqrt{M'(z)}}{\sqrt{K(z,z)}} \\
&=& \sqrt{\Delta \log K^2 (z,z)}.
\end{eqnarray*}
In the second line, we applied H\"older's inequality. In the last line we notice that the argument of the square root is really positive. This is a consequence of Theorem~\ref{thm4}. 

Next, each
function $f \in H^2(G, \mathcal{C}l_{0m} )$ with $\Vert f \Vert
\leq 1$ satisfies
\begin{eqnarray*}
 |f(z)| &=& |\langle f, K(z, \cdot) \rangle| \\
& \leq & \Vert f \Vert \Vert K(z, \cdot) \Vert \\
&\leq& \sqrt{\langle K(z, \cdot), K(z, \cdot) \rangle}\\
&=& \sqrt{K(z,z)}.\\
\end{eqnarray*}
Again, in the second line we applied H\"older's inequality. Notice that we require the result of lemma \ref{vier} for the root to be well-defined.\\
Furthermore, we have with $g(w):= \frac{M(w)}{\Vert M \Vert}$ that
$\Vert g \Vert \leq 1$ and
 \begin{eqnarray*}
  d_C(z) &\geq & |g'(z)| \\
&=& |\frac{M'(z)}{\Vert M \Vert}| \\
&=& \frac{\Vert M \Vert^2}{\Vert M \Vert K(z,z)} \\
&=& \frac{\Vert M \Vert}{K(z,z)}
 \end{eqnarray*}
A simple calculation shows that $\Vert M \Vert = \Vert M' \Vert$.\\
 From this equality it follows as a consequence of $DM'=0$  that
\[
  \frac{M'(z)}{\Vert M \Vert} \leq \sqrt{K(z,z)},
\]
and thus
\[
   \Vert M \Vert \frac{1}{K(z,z)} \leq \sqrt{K(z,z)}.
\]
So, powers of both metrics can be estimated from below through the Szeg\"o-Carath{\'e}odory metric. But from this property we may draw the conclusion that each set $E$ which is bounded in these metrices is also bounded in the Szeg\"o-Carath{\'e}odory metric. As this metric is complete, $E$ must be relatively compact in $G$. Thus both these metrices are also complete and the propostion is proven.
\end{proof}

 From the completeness of the Szeg\"o-Carath{\'e}odory metric we can finally establish our main result of this section:

\begin{theorem}
 The Szeg\"o metric is complete.
\end{theorem}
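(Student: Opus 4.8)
The plan is to obtain completeness of the Szeg\"o metric as a direct consequence of the domination inequality $ds_G \geq C_G$ from Theorem~\ref{relation_s_c_metriken}\,a), combined with the completeness of the Szeg\"o-Carath\'eodory metric $C_G$ already established above for bounded domains with $C^2$-smooth boundary. The guiding principle I would invoke is that a Hermitian metric which dominates a complete metric pointwise is itself complete, so that essentially all of the substantive work has in fact been carried out in the preceding theorems and this statement is really a corollary.

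First I would transfer the pointwise inequality between the line elements into an inequality between the induced distance functions. Since $ds_G(z) \geq C_G(z)$ holds at every point $z \in G$, monotonicity of the integral in the definition of the length functional gives $L_{ds_G}(C) \geq L_{C_G}(C)$ for every piecewise continuously differentiable path $C$ in $G$; passing to the infimum over all paths joining two fixed points $z_1, z_2 \in G$ then yields $d_{ds_G}(z_1,z_2) \geq d_{C_G}(z_1,z_2)$. Thus the distance induced by the Szeg\"o metric dominates the one induced by the Szeg\"o-Carath\'eodory metric.

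Next I would translate this into a statement about bounded sets. Any set $E \subseteq G$ that is bounded with respect to $ds_G$ is then also bounded with respect to $C_G$, and by the completeness of $C_G$ --- concretely, by the lower bound $\delta \geq C_3(r_0) M$ on the Euclidean boundary distance derived in its proof --- such an $E$ must remain at a strictly positive Euclidean distance from $\partial G$. As $G$ is bounded, the Euclidean closure of $E$ is a compact subset of $G$, so $E$ is relatively compact in $G$. Finally I would conclude completeness of $(G, d_{ds_G})$: since $K_G(z,z)$ is continuous and strictly positive, the Szeg\"o metric induces the Euclidean topology on $G$, so a $d_{ds_G}$-Cauchy sequence is in particular bounded, hence relatively compact by the previous step, hence admits a subsequence converging to some $z \in G$; being Cauchy, the whole sequence then converges to $z$, and completeness follows.

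The step I expect to be the main (and essentially the only) obstacle is the passage from pointwise domination of the line elements to domination of the induced geodesic distances, together with the remark that continuity and positivity of $K_G(z,z)$ make the metric topology agree with the Euclidean one; this topological matching is precisely what legitimately converts the property \emph{bounded sets are relatively compact} into genuine completeness of the metric space. Everything beyond this is a formal consequence of the completeness of $C_G$ proven above.
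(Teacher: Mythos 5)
Your proposal is correct and follows essentially the same route as the paper: both deduce $d_{C}\leq d_{S}$ from Theorem~\ref{relation_s_c_metriken}\,a), conclude that $d_S$-bounded sets are $d_C$-bounded and hence relatively compact by the completeness of the Szeg\"o-Carath\'eodory metric, and infer completeness of the Szeg\"o metric. You merely spell out two steps the paper leaves implicit (the passage from pointwise domination of line elements to domination of distances, and the topological argument converting relative compactness of bounded sets into convergence of Cauchy sequences).
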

\begin{proof}
 In regard of Theorem \ref{relation_s_c_metriken}, we have for all $z, w \in G$
\[
  d_C(z,w) \leq d_S(z, w).
\]
So each $d_S$-bounded set $H \subset G$ is simultaneously
$d_C$-bounded. As the Szeg\"o-Carath{\'e}odory metric is complete,
$H$ must be relatively compact in $G$. With this, each
$d_S$-bounded set is also relatively compact in $G$ and $d_S$ thus
complete.
\end{proof}

Perspectives:
We have already mentioned that in the theory of several complex variables there exists a strong connection between the completeness of the metric and the smoothness of the boundary of the domain. So it is an obvious point of investigation for further works if and how these propositions can be transferred to the Clifford case, as our hitherto existing results require a strong smoothness ($C^2$-boundary).

\par\medskip\par Acknowledgements. The second author gratefully acknowledges the fruitful und helpful discussions with Dr. Denis Constales from Ghent University on the topic of this paper which lead to the successful development of it.

\end{document}